\newcommand{\R}{\mathbb{R}}
\newcommand{\E}{\mathbb E}
\def\1{\mathbb{I}}
\numberwithin{equation}{section}
\theoremstyle{plain}
\newcounter{thm}[section]
\newcounter{appen}
\newtheorem{theor}[thm]{Theorem}
\newtheorem{defi}[thm]{Definition}
\newtheorem{lem}[thm]{Lemma}
\newtheorem{remark}[thm]{Remark}
\newtheorem{proposition}[thm]{Proposition}
\newtheorem{lemma}[appen]{Lemma}
\begin{document}
	
	%%%%%%%%%%%%%%%%%%%%%%%%%%%%%%%%%%%%%%%%%%%%%%%%%%%%%%%%%%%%%%%%%%%%%%%%%%%%%%%%%%%%%%%%%%%%%%%%%%%%%%%%%%%%%%%%%%%%%%%%%%%%
	%%%%%%%%%%%%%%%%%%%%%%%%%%%%%%%%%%%%%%%%%%%%%%%%%%%%%%%%%%%%%%%%%%%%%%%%%%%%%%%%%%%%%%%%%%%%%%%%%%%%%%%%%%%%%%%%%%%%%%%%%%%%

	\title{Least square estimators in linear regression models under negatively superadditive dependent random observations}

\author{
\name{Karine Bertin\textsuperscript{a}\thanks{CONTACT Karine Bertin. Email: karine.bertin@uv.cl, Final version will be published in Statistics}, Soledad Torres\textsuperscript{a} and Lauri Viitasaari\textsuperscript{b}}
\affil{\textsuperscript{a}CIMFAV-INGEMAT, Universidad de Valpa{r}a\'{\i}so, Chile; \textsuperscript{b}Uppsala University, Department of Mathematics, Sweden}
}
\date{}
\maketitle
	
	% \renewcommand{\baselinestretch}{2}
	%
	% \markright{ \hbox{\footnotesize\rm Statistica Sinica
	% 		%{\footnotesize\bf 24} (201?), 000-000
	% 	}\hfill\\[-13pt]
	% 	\hbox{\footnotesize\rm
	% 		%\href{http://dx.doi.org/10.5705/ss.20??.???}{doi:http://dx.doi.org/10.5705/ss.20??.???}
	% 	}\hfill }
	%
	% \markboth{\hfill{\footnotesize\rm Karine Bertin, Soledad Torres and Lauri Viitasaari} \hfill}
	% {\hfill {\footnotesize\rm Linear regression models under negatively superadditive dependence} \hfill}
	%
	% \renewcommand{\thefootnote}{}
	% $\ $\par
	%
	% %%%%%%%%%%%%%%%%%%%%%%%%%%%%%%%%%%%%%%%%%%%%%%%%%%%%%%%%%%%%%%%%%%%%%%%%%%%%%%%%%%%%%%%%%%%%%%%%%%%%%%%%%%%%%%%%%%%%%%%%%%%%
	%
	% \fontsize{12}{14pt plus.8pt minus .6pt}\selectfont \vspace{0.8pc}
	% \centerline{\large\bf Least square estimators in linear regression }
	% \vspace{2pt}
	% \centerline{\large\bf models under negatively superadditive }
	% \vspace{2pt}
	% \centerline{\large\bf dependent random observations}
	% \vspace{.4cm}
	% \centerline{Karine Bertin, Soledad Torres and Lauri Viitasaari}
	% \vspace{.4cm}
	% \centerline{\it Universidad de Valparaíso and Aalto University School of Business}
	% \vspace{.55cm} \fontsize{9}{11.5pt plus.8pt minus.6pt}\selectfont
	
	%%%%%%%%%%%%%%%%%%%%%%%%%%%%%%%%%%%%%%%%%%%%%%%%%%%%%%%%%%%%%%%%%%%%%%%%%%%%%%%%%%%%%%%%%%%%%%%%%%%%%%%%%%%%%%%%%%%%%%%%%%%%
	
\begin{abstract}
	In this article we study the asymptotic behaviour of the least square estimator in a linear regression model based on random observation instances. We provide mild assumptions on the moments and dependence structure on the randomly spaced observations and the residuals under which the estimator is strongly consistent. In particular, we consider observation instances that are negatively superadditive dependent within each other, while for the residuals we merely assume that they are generated by some continuous function. We complement our findings with a simulation study providing insights on finite sample properties.
		
\end{abstract}

\begin{keywords}
Linear regression, Least square estimator, Random times, Negatively superadditive dependent, Asymptotic properties
\end{keywords}

	% \def\thefigure{\arabic{figure}}
	% \def\thetable{\arabic{table}}
	%
	% \renewcommand{\theequation}{\thesection.\arabic{equation}}
	%
	%
	% \fontsize{12}{14pt plus.8pt minus .6pt}\selectfont
	%
\section{Introduction}

In this paper, we study {the strong consistency of the least square estimator in} a simple regression model with a linear trend and continuous noise where the observation measurements are made at random times $\tau_i$. More precisely,
consider the simple regression model
\begin{equation}\label{modelo}
Y_{\tau_{i}} = a  \tau_{i} + \varepsilon_{\tau_{i}}, \quad  {1\le i\le N(T)}
\end{equation}
whose unknown parameter $a$ must be estimated. Here $\tau := \{ \tau_i \}_{i\in\mathbb{N}}$ is a random increasing sequence of {positive} random variables
and $N(T)=\sum_{j\geq 1} 1_{\tau_j \leq T}$ determines the number of observations in $[0,T]$ with $T>0$.
Random time observation models arise in many domains. In medicine,  Lange et al. \cite{Lange} study  the  disease process  according to a latent continuous-time Markov chain  with observation rates that depend on the individual's underlying disease status. In finance, A\"it-Sahalia and Mykland \cite{sahalia}  consider random times in the case of continuous time diffusions.
In  biomedical studies, the event of interest can occur more than once in a participant. This recurrent events can be thought as correlated random periods.
The effect of the random sampling compared to discrete ones have been considered in  \cite{Mc}.  They study the estimation of the mean and autocorrelation parameter of a stationary Gaussian process under two sampling schemes:  sampling the continuous time process at fixed equally-spaced times and  sampling at random times based on a  renewal process.
In non-parametric statistics, Masry \cite{masry1} studies the problem of estimating an unknown probability density function, based on $n$ independent observations sampled at random times. Vilar and Vilar
(\cite{vilar}, \cite{vilar2000}) investigate non-parametric regression estimation with randomly spaced observations associated to renewal processes.
Strong consistency of the least-square estimator of the trend in model \eqref{modelo} is proved in  \cite{araya} and \cite{araya1}. They consider i.i.d. random times based  either on jittered sampling or renewal processes, and increments of fractional Brownian motion as the noise.

In this paper, we extend the results of Araya et al. (\cite{araya}, \cite{araya1})  by considering more general assumptions on the noise $\varepsilon$ and dependent observation measurements  $\tau_i$ characterized by negative superadditive dependent (NSD) positive random variables. Additionally, only observations until a time $T>0$ are considered, assuming, without loss of generality, $T=1$.

NSD dependence for random variables have been first introduced  by Hu \cite{hu2000negatively} who extends the concept of negatively associated (NA) dependence (see \cite{AS}). NSD allows one to get many important inequalities and convergence results.  Eghbal et al. \cite{E1} derive convergence results of sums of quadratic forms of NSD random variables under the assumption of existence of  moment  of order $r>1$.  Shen et al. \cite{S1} study almost sure convergence and strong stability for weighted sums of NSD random variables.
Shen et al. \cite{S2} investigate strong convergence for NSD random variables and present some moment inequalities.
Wang et al. \cite{Wang} study complete convergence for arrays of rowwise NSD random variables with applications to nonparametric regression.

In this work, we prove the strong consistency of the least-square estimator of the trend $a$ and derive the rate of convergence. For the noise $\varepsilon$, we merely assume that it arises from increments of a continuous function.
We do not pose any requirements on the dependence structure between $\varepsilon$ and the random times $\tau_i$. On top of minimal requirements on $\varepsilon$, this fact provides additional flexibility to our model class.
We illustrate the performances of our estimator in a simulation study where random times are given by sum of NSD log-normal variables.

The paper is organized as follows. Section \ref{sec:2} presents the model, the assumptions on the random times, and our main results. Section~\ref{sim} is devoted to the simulation study and Section~\ref{proofs} to the proofs.

\section{Linear regression model with random time observations}
\label{sec:2}
We begin by introducing the underlying linear regression model and assumptions on the random observations required for the proofs of the main results, Theorem \ref{thm:N1} and Theorem \ref{thm:N}.

We assume that observations are made on random time instances $(\tau_{i})_{i\in \mathbb{N}}$ determined by an increasing sequence of random variables $\tau_i$. In order to force observation instances $\tau_i$ into a fixed time interval $[0,T]$, we denote by $N(T)$ the amount of observations needed to reach the fixed time $T$. That is, we set
 {\begin{equation}\label{eq:N(1)}N(T) = \arg\max_{i \in \mathbb{N} } \{\tau_i :  \,\, \tau_i \leq T\}=\sum_{j\geq 1} 1_{\tau_j \leq T}.
 \end{equation}}
Without loss of generality and for the sake of simplicity, we let $T=1$ and consider $N(1)$ as the number of observation time instances before time $T=1$. The observation times $\tau_i$ are given by
{\begin{equation}\label{tauin}
\tau_{i}=\sum_{j=1}^i t_{j}, i \ge 1,
\end{equation}}
where now $\{t_{j}\}_{j\in\mathbb{N}}$ is a sequence of positive random variables. %with $t_0=0$ and generates the observation times $\tau_i$.
With this notation, we consider
the linear regression model
\begin{equation}\label{reg}
Y_{\tau_{i}} =  a \tau_{i} + \varepsilon_{\tau_{i}}, \quad i=1,\ldots, N(1).
\end{equation}
Here $a\in\R$ is the unknown parameter to be estimated, and the residuals $\varepsilon_{\tau_i}$ are given by
\begin{equation*}
\varepsilon_{\tau_{i}}=W_{\tau_{i}}-W_{\tau_{i-1}},
\end{equation*}
where $W$ is an arbitrary (almost surely) continuous function {and we fix $\tau_0=0$. } We stress that the only assumption on $W$ we require is the continuity. Consequently, $W$ can be chosen to be a path of any almost surely continuous stochastic process or even a deterministic continuous function.

We estimate the model parameter $a$ by
the classical least square estimator (LSE) with random number of observations $N(1)$, given by
\begin{equation*}
\hat{a}_{N(1)}=  \frac{\displaystyle \sum_{i=1}^{N(1)} Y_{\tau_{i}}{\tau_{i}}}{\displaystyle \sum_{i=1}^{N(1)}{\tau_{i}^2}}.
\end{equation*}
Clearly, we have
\begin{equation}
\label{eq:estimator-difference}
\hat{a}_{N(1)}-a=\frac{\displaystyle \sum_{i=1}^{N(1)} {\tau_{i}}\left(W_{\tau_{i}}-W_{\tau_{i-1}}\right)}{\displaystyle \sum_{i=1}^{N(1)} {\tau_{i}^2}}.
\end{equation}
{In order to obtain strong consistency and the rate of convergence for the estimator (cf. Theorem \ref{thm:N1}), we need to pose assumptions on %the random times $t_i$, %generating the observation instances $\tau_i$ by \eqref{tauin}.   %Since the estimator $\hat{a}_{N(1)}$ is based only on time instances $\tau_i\leq 1$,
%It is natural to pose assumptions on
time instances $t_i$ that ensure a sufficient amount of observations in the interval $[0,1]$.  We fix $N\in \mathbb{N}$.} As our first assumptions, we pose the following moment conditions.
\begin{itemize}
\item[$(H_1)$] $\mathbb{E}(t_{i})=\frac{1}{N}$ for all $i$ (and $N$).
\item[$(H_2)$] There exists $r>0$ and a constant $\tilde{C}>0$ such that, for all $i$ and $N$, we have $\mathbb{E}(t_{i}^{2+r})\le \frac{\tilde C}{N^{2+r}}$.
\end{itemize}

{Note that the number N describing the means of random variables $t_i$ can be roughly understood as the sampling rate (average number of observations in $[0,1]$). In fact, below in Lemma 4.4 we show that the number of observations $N(1)$ on $[0,1]$ satisfies $N(1)/N \to 1$ as $N$ tends to $\infty$. Note also that the observation times $\tau_i$ as well as the generating random variables $t_i$ depend on $N$. However, for the sake of readability, we omit this dependence in the notation.}

% {NUMBER 2: Note that $N$ can be viewed as an asymptotic amount of observations in $[0,1]$ and in fact as we will see below in Lemma~\ref{lma:N1-N-ratio} we have that $N(1)$ the number of observations into $[0,1]$ satisfies $N(1)/N$ converges almost surely to $1$ as $N$ tends to $\infty.$ Note also that the observation times $\tau_i$ as well as the generating random variables $t_i$ depend on $N$. However, for the sake of readability, we omit this dependence in the notation.}

{Moreover, we stress that assumptions $(H_1)$ and $(H_2)$ are rather mild. Condition $(H_1)$ ensures, as we said previously, that $N(1)$ is roughtly proportional to $N$ (see Lemma \ref{lma:N1-N-ratio}). }%below for a precise statement of this fact.
Condition $(H_2)$ requires slightly better than square integrability, and thus one can consider even random times with relatively heavy tails.
Moreover, the bound of $(H_2)$ ensures that random times $t_i$ are sufficiently concentrated around their mean $1/N$. This in turn implies that consecutive observation times $\tau_i$ and $\tau_{i+1}$ are never too far from each other.

In addition to the moment conditions $(H_1)$ and $(H_2)$ above, our third hypothesis  is related to the dependence structure within the sequence $ \{  t_i \}_{i\in\mathbb{N}}$. For this we need some preliminary definitions.
\begin{defi}(Kemperman \cite{kemperman1977fkg}). A function $\phi  : \mathbb{R}^m \mapsto  \mathbb{R}$ is called superadditive, if  for all $ x, y \in  \mathbb{R}^m$ we have
$$ \phi (x \vee y) + \phi (x \wedge  y) \ge  \phi(x) + \phi(y),$$ where $\vee $  stands for componentwise maximum and $\wedge$ for componentwise minimum.
\end{defi}
A characterization of smooth superadditive functions is given in the following lemma.
\begin{lemma}\label{lem:kemp} (Kemperman \cite{kemperman1977fkg}).
If $\phi$ has continuous second partial derivatives, then the superadditivity of $\phi$ is equivalent to $\frac{\partial ^2 \phi }{\partial x_i x_j} \ge 0,1 \leq  i \neq j \leq  m$.
\end{lemma}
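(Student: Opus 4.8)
The plan is to prove the two implications separately, using the finite-difference form of $\phi$ as the bridge between the global superadditivity inequality and the local sign condition on the mixed partials.

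First I would dispatch the easy direction: superadditivity implies $\frac{\partial^2 \phi}{\partial x_i \partial x_j}\ge 0$ for $i\neq j$. Fix a point $p\in\R^m$, two distinct indices $i\neq j$, and small $h,k>0$, and apply the superadditivity inequality to $u=p+h e_i$ and $v=p+k e_j$, where $e_i$ is the $i$-th coordinate vector. Because $i\neq j$ one has $u\vee v=p+he_i+ke_j$ and $u\wedge v=p$, so the definition of superadditivity gives
$$\phi(p+he_i+ke_j)+\phi(p)\ge \phi(p+he_i)+\phi(p+ke_j),$$
which is exactly the assertion that the second-order mixed difference of $\phi$ at $p$ is nonnegative. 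Dividing by $hk>0$ and letting $h,k\to 0^+$, the continuity of the second partials yields $\frac{\partial^2\phi}{\partial x_i\partial x_j}(p)\ge 0$.

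The substantive direction is the converse. Given $x,y\in\R^m$, set $a=x\wedge y$ and $b=x\vee y$, and split the coordinates into $S_1=\{i:x_i>y_i\}$ and $S_2=\{j:y_j>x_j\}$ (coordinates with $x_i=y_i$ are inert). The key observation is that the four vectors $a,x,y,b$ are the corners of a box spanned by two displacement vectors supported on disjoint coordinate blocks: writing $w=\sum_{i\in S_1}(x_i-y_i)e_i\ge 0$ and $v=\sum_{j\in S_2}(y_j-x_j)e_j\ge 0$, one checks directly that $x=a+w$, $y=a+v$, and $b=a+v+w$. Consequently the superadditivity claim $\phi(a)+\phi(b)\ge\phi(x)+\phi(y)$ becomes $F(0,0)+F(1,1)\ge F(1,0)+F(0,1)$ for the scalar two-parameter function $F(s,t)=\phi(a+sv+tw)$, that is, a single mixed second difference of $F$ over the unit square.

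I would finish by writing this difference as $\int_0^1\int_0^1 \partial_s\partial_t F(s,t)\,ds\,dt$ via the fundamental theorem of calculus, and computing, since $w$ lives on $S_1$ and $v$ on $S_2$,
$$\partial_s\partial_t F(s,t)=\sum_{i\in S_1}\sum_{j\in S_2} w_i\,v_j\,\frac{\partial^2\phi}{\partial x_i\partial x_j}(a+sv+tw).$$
Here the hypothesis enters decisively: because $S_1$ and $S_2$ are disjoint, every pair satisfies $i\neq j$, so each $\frac{\partial^2\phi}{\partial x_i\partial x_j}\ge 0$; combined with $w_i\ge 0$ and $v_j\ge 0$, the integrand is nonnegative, whence the integral—and thus the superadditivity inequality—follows. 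The main obstacle is precisely the multi-coordinate bookkeeping in this direction, for the cross-difference of $\phi$ at the four corners is \emph{not} a single mixed partial once $x$ and $y$ differ in more than two coordinates. The device that resolves it is the reduction to $F(s,t)$, which collapses all the $S_1$- and $S_2$-directions into two aggregate directions while preserving the crucial feature that these two directions never share a coordinate, so that only off-diagonal second derivatives appear.
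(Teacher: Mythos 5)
Your proof is correct in both directions. Note first that the paper itself offers no proof of this lemma: it is stated as a citation to Kemperman, so there is no internal argument to compare against; what you have written is a complete, self-contained justification of the cited fact. The easy direction is handled properly: applying superadditivity to $u=p+he_i$, $v=p+ke_j$ with $i\neq j$ gives exactly the nonnegativity of the mixed second difference, and since that difference equals $\int_0^h\int_0^k \frac{\partial^2\phi}{\partial x_i\partial x_j}(p+ue_i+ve_j)\,dv\,du$, dividing by $hk$ and using continuity of the second partials gives the pointwise sign at $p$. The converse is where the real content lies, and your reduction is the right one: decomposing $x=a+w$, $y=a+v$, $b=a+v+w$ with $w$ supported on $S_1=\{i:x_i>y_i\}$ and $v$ supported on the disjoint set $S_2=\{j:y_j>x_j\}$ turns the superadditivity inequality into a single mixed second difference of the $C^2$ function $F(s,t)=\phi(a+sv+tw)$ over the unit square, and the identity
\begin{equation*}
F(1,1)-F(1,0)-F(0,1)+F(0,0)=\int_0^1\!\!\int_0^1 \sum_{i\in S_1}\sum_{j\in S_2} w_i v_j\,\frac{\partial^2\phi}{\partial x_i\partial x_j}(a+sv+tw)\,ds\,dt
\end{equation*}
is exactly where the hypothesis is used: disjointness of $S_1$ and $S_2$ guarantees that only off-diagonal partials appear, so no assumption on the diagonal terms $\frac{\partial^2\phi}{\partial x_i^2}$ is needed, matching the precise statement of the lemma. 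This aggregation device (collapsing all coordinates of each block into one parameter) is the standard route in the supermodularity literature; an alternative, more combinatorial route would increment one coordinate at a time along a lattice path from $x\wedge y$ to $x\vee y$, but your two-parameter version is cleaner and equally rigorous.
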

Negatively superadditive dependent (NSD) variables are defined in the following way.
\begin{defi}\label{def:hu}
(Hu \cite{hu2000negatively}). A random vector $(X_1,X_2, \ldots ,X_m)$  is said to be negatively superadditive dependent, if for any  superadditive function $\phi$ such that the expectation $\mathbb{E} \phi(X_1,X_2, \ldots ,X_m)$ exists, we have
$$\mathbb{E} \phi(X_1,X_2, \ldots ,X_m) \leq \mathbb{E}\phi({X}^*_1,{X}^*_2, \ldots , {X}^*_m),$$
where ${X}^*_1 , {X}^*_2 , \ldots  , {X}^*_m$  are independent and, for all $1\leq i\leq m$,  $X_i$ and ${X}^*_i$ are equally distributed.
\end{defi}
Any vector consisting of independent random variables $X_i$ is trivially NSD. Many non-trivial examples of NSD variables with values in $\R$ or $\R^m$ are given in the literature. In particular, some classes of elliptical NSD variables are given in \cite{block1988conditionally} and \cite{hu2000negatively}. More specifically, variables with a density given by
\begin{equation*}
g(x)=|\Sigma|^{-1/2} g\left( x^t\Sigma x\right), \quad x\in\R^m
\end{equation*}
are NSD, provided that $\Sigma=(\sigma_{i,j})$ satisfies $\sigma_{i,j}<0$ for $i\neq j$ and $\int_0^\infty g(t) t^{m/2-1}dt <\infty$.
As a consequence, a random vector that follows a multivariate normal distribution with covariance matrix $\Sigma=(\sigma_{i,j})$ satisfying $\sigma_{i,j}<0$ for $i\neq j$ is NSD.
Our third and final assumption on the random times $t_i$ is the following.
\begin{itemize}
\item[$(H_3)$] For all $m\in\mathbb{N}$ (and $N$), the vector $(t_1,\ldots,t_m)$ is NSD.
\end{itemize}
\begin{remark}
\label{remark:second-moment}
Note that $(H_1)$ and $(H_3)$ imply that, for any $i\neq j$,
$$\mathbb{E}(t_{i}t_{j})\le \frac{1}{N^2}.$$ Indeed, {since the function $(x_1,x_2)\mapsto x_1x_2$ is superadditive, }
$\mathbb{E}(t_{i}t_{j})\le \mathbb{E}(t^*_{i}t^*_{j})=\frac{1}{N^2}$, where the equality follows from the independence of $t^*_i$ and $t^*_j$. Note also that $(H_2)$ together with H\"older inequality gives us
$$\mathbb{E}(t_{i}^2)\le \frac{\tilde{C}^{2/(2+r)}}{N^2}.$$
\end{remark}
\begin{remark}
Many non-trivial examples of NSD vectors presented in the literature take values in $\R^m$ while for our purposes the random times $t_i$ are assumed to be non-negative.
A simple approach to construct non-negative NSD vectors, suitable for our purposes, is to choose an $\R^m$-valued NSD vector $(X_1,\dots,X_m)$ and a non-decreasing function $\varphi:\R\to\R_+$. Then by a routine exercise one can show that $(\varphi(X_1),\dots,\varphi(X_m))$ is non-negative and NSD. Indeed, with Lemma~\ref{lem:kemp} one can show that if $\phi:\R^m\to\R$ is superadditive and $\varphi:\R\to\R_+$ is non-decreasing, then the function $\tilde{\phi}: \R^m\to\R$ defined by
\begin{equation}
\tilde{\phi}(x_1,\dots,x_m)=\phi(\varphi(x_1),\dots,\varphi(x_m))
\end{equation}
is superadditive. As a particular example, a vector $(Y_1,\dots,Y_m)$ of multivariate log-normal variables $Y_i$ with parameter $\Sigma=(\sigma_{i,j})$ satisfying $\sigma_{i,j}<0$ for $i\neq j$ is NSD. Indeed, in this case we have $Y_i=\exp(X_i)$ for all $1\leq i\leq m$, where $(X_1,\dots,X_m)$ is a multivariate normal NSD vector. This example is used in our simulation study, provided in Section \ref{sim}.
\end{remark}
We are now ready to state the main results of this paper.
\begin{theor}
\label{thm:N1}
Suppose that the sequence of random times  $\{t_{j} \}_{j \in \mathbb{N}}$  satisfies $(H_1)$ to $(H_3)$ and that $W$ is continuous almost surely. Then we have, almost surely as $N\to \infty$,
{$$
N(1)\left(\hat{a}_{N(1)}-a\right) \to 3\int_0^1 (W_1 - W_s) ds,
$$}
where  $N(1)$ is defined in \eqref{eq:N(1)}.
\end{theor}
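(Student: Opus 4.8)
The plan is to work directly from the identity \eqref{eq:estimator-difference}, multiply by $N(1)$, and rescale numerator and denominator by $N$, writing
\[
N(1)\left(\hat a_{N(1)}-a\right)=\frac{N(1)}{N}\cdot\frac{\sum_{i=1}^{N(1)}\tau_i\left(W_{\tau_i}-W_{\tau_{i-1}}\right)}{\frac1N\sum_{i=1}^{N(1)}\tau_i^2}.
\]
By Lemma~\ref{lma:N1-N-ratio} the prefactor satisfies $N(1)/N\to1$ almost surely, so it suffices to prove that the denominator converges almost surely to $\tfrac13$ and that the numerator converges almost surely to $\int_0^1(W_1-W_s)\,ds$; the stated limit is then immediate.

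The probabilistic heart of the argument, and the step I expect to be the main obstacle, is an almost sure uniform law of large numbers for the partial sums, namely
\[
\sup_{1\le i\le N(1)}\left|\tau_i-\tfrac{i}{N}\right|\longrightarrow 0\quad\text{a.s.}
\]
Since $\tau_i-\tfrac{i}{N}=\sum_{j=1}^i\bigl(t_j-\tfrac1N\bigr)$ is a partial sum of centred random variables, I would combine the NSD hypothesis $(H_3)$ with the moment bound $(H_2)$: the NSD structure yields Rosenthal/Kolmogorov-type maximal moment inequalities (as in Shen et al.~\cite{S1,S2}), so that $\mathbb{E}\max_{k\le 2N}\bigl|\tau_k-\tfrac{k}{N}\bigr|^{2+r}$ is controlled by $\sum_j\mathbb{E}\bigl|t_j-\tfrac1N\bigr|^{2+r}$ and $\bigl(\sum_j\mathbb{E}(t_j-\tfrac1N)^2\bigr)^{(2+r)/2}$, which by $(H_2)$ and Remark~\ref{remark:second-moment} are of order $N^{-(1+r)}$ and $N^{-(1+r/2)}$ respectively. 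A Markov inequality followed by Borel--Cantelli along $N$ then gives the claimed almost sure convergence, the summability being exactly where the strict inequality $r>0$ in $(H_2)$ is used. The same moment bound, applied through Borel--Cantelli to $\max_{i\le 2N}t_i$, also shows that the mesh $\max_{1\le i\le N(1)}t_i\to0$ almost surely.

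For the denominator I would write $\tau_i^2=\bigl(\tfrac{i}{N}\bigr)^2+\bigl(\tau_i-\tfrac{i}{N}\bigr)\bigl(\tau_i+\tfrac{i}{N}\bigr)$ and sum. The deterministic part gives $\frac1{N^3}\sum_{i=1}^{N(1)}i^2\to\tfrac13$ using $N(1)/N\to1$, while the remainder is bounded in absolute value by $\frac{N(1)}{N}\bigl(1+\frac{N(1)}{N}\bigr)\sup_{1\le i\le N(1)}\bigl|\tau_i-\tfrac{i}{N}\bigr|$ (using $\tau_i\le\tau_{N(1)}\le1$ and $i\le N(1)$), which tends to $0$ by the uniform law of large numbers. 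Hence $\frac1N\sum_{i=1}^{N(1)}\tau_i^2\to\tfrac13$ almost surely.

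For the numerator I would apply summation by parts together with $\tau_0=0$ to obtain
\[
\sum_{i=1}^{N(1)}\tau_i\left(W_{\tau_i}-W_{\tau_{i-1}}\right)=\tau_{N(1)}W_{\tau_{N(1)}}-t_1W_0-\sum_{i=1}^{N(1)-1}\left(\tau_{i+1}-\tau_i\right)W_{\tau_i}.
\]
From $\tau_{N(1)}\le1<\tau_{N(1)+1}=\tau_{N(1)}+t_{N(1)+1}$ and the vanishing mesh we get $\tau_{N(1)}\to1$, so continuity of $W$ yields $\tau_{N(1)}W_{\tau_{N(1)}}\to W_1$ and $t_1W_0\to0$; the remaining term is a left Riemann sum of the (uniformly) continuous function $W$ over a partition of $[\tau_1,\tau_{N(1)}]$ whose mesh vanishes and whose endpoints tend to $0$ and $1$, hence it converges to $\int_0^1 W_s\,ds$. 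Collecting the three terms gives $W_1-\int_0^1 W_s\,ds=\int_0^1(W_1-W_s)\,ds$, and combining this with the denominator limit and $N(1)/N\to1$ completes the proof.
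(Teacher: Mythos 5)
Your proof is correct, and its skeleton matches the paper's: split \eqref{eq:estimator-difference} into numerator and denominator, send the denominator to $1/3$ and the numerator to $\int_0^1(W_1-W_s)\,ds$. Indeed, your treatment of the numerator (summation by parts, $\tau_{N(1)}\to 1$, vanishing mesh, convergence of left Riemann sums of the continuous function $W$) is essentially identical to the paper's, up to a harmless regrouping of the Abel summation and integrating over $[\tau_1,\tau_{N(1)}]$ rather than appending the point $1$ to the partition. The genuine difference lies in the probabilistic core used for the denominator. The paper proves Proposition~\ref{prop:D-part}, i.e.\ $\frac1N\sum_{k=1}^N(\tau_k-\E\tau_k)^2\to 0$, by applying the Rosenthal bound \eqref{ros} termwise to each $\tilde\tau_k$ and combining via Minkowski's inequality; it then transfers from the deterministic range $1\le j\le N$ to the random range $1\le j\le N(1)$ by splitting the sum and bounding the discrepancy by a multiple of $|N-N(1)|/N(1)$, using Lemmas~\ref{lma:tauN-limit} and~\ref{lma:N1-N-ratio}. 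You instead establish a uniform law of large numbers, $\max_{1\le k\le 2N}\left|\tau_k-\tfrac{k}{N}\right|\to 0$ a.s., by a single application of the maximal form of \eqref{ros} (yielding $\E\max_{1\le k\le 2N}\left|\tau_k-\tfrac{k}{N}\right|^{2+r}\le C N^{-1-r/2}$, then Markov and Borel--Cantelli, with $r>0$ supplying summability exactly as you note), and combine it with $N(1)\le 2N$ eventually (from Lemma~\ref{lma:N1-N-ratio}) and the identity $\tau_i^2=\left(\tfrac{i}{N}\right)^2+\left(\tau_i-\tfrac{i}{N}\right)\left(\tau_i+\tfrac{i}{N}\right)$ to handle $\frac1N\sum_{i=1}^{N(1)}\tau_i^2$ in one stroke, with no splitting at $N$. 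What each approach buys: your maximal-inequality route is more unified, since the uniform estimate automatically accommodates the random summation limit and would also recover $\tau_N\to 1$ and the mesh bound as by-products; the paper's formulation isolates Proposition~\ref{prop:D-part} on the deterministic range so that it can be reused verbatim for the fixed-$N$ estimator of Theorem~\ref{thm:N}. Both arguments consume the hypotheses in the same places: $(H_3)$ to invoke \eqref{ros}, $(H_1)$ for the centering, and $(H_2)$ with $r>0$ for Borel--Cantelli summability.
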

If for some particular reason the observation window does not have to be restricted to the time interval $[0,1]$, then one can use all observations in the estimation and consider an estimator
\begin{equation*}
\hat{a}_{N}=  \frac{\displaystyle \sum_{i=1}^{N} Y_{\tau_{i}}{\tau_{i}}}{\displaystyle \sum_{i=1}^{N}{\tau_{i}^2}}.
\end{equation*}
In this case the result can be formulated similarly as in Theorem \ref{thm:N1}.
\begin{theor}
\label{thm:N}
Suppose that the sequence of random times  $\{ t_{j} \}_{j \in \mathbb{N}}$ satisfies hypothesis  $(H_1)$ to $(H_3)$ and that $W$ is continuous almost surely. Then we have, almost surely as $N\to \infty$,
{$$
N\left(\hat{a}_{N}-a\right) \to 3\int_0^1 (W_1 - W_s) ds.
$$}
\end{theor}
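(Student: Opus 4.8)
The plan is to mirror the argument used for Theorem~\ref{thm:N1}, replacing $N(1)$ by $N$ throughout and using $\tau_N\to 1$ in place of $\tau_{N(1)}\to 1$. Writing the rescaled error as a ratio,
\[
N\left(\hat a_N-a\right)=\frac{\sum_{i=1}^{N}\tau_i\left(W_{\tau_i}-W_{\tau_{i-1}}\right)}{\frac1N\sum_{i=1}^{N}\tau_i^2},
\]
the whole statement reduces to identifying the almost sure limits of the numerator and of the normalized denominator. The single probabilistic fact driving both limits is the uniform strong law $\delta_N:=\max_{1\le i\le N}\bigl|\tau_i-\tfrac iN\bigr|\to 0$ almost surely (the same engine behind Lemma~\ref{lma:N1-N-ratio}), and this is where assumptions $(H_1)$--$(H_3)$ enter. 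I would prove it by centering, $\tau_i-\tfrac iN=\sum_{j=1}^i\bigl(t_j-\tfrac1N\bigr)$: by Remark~\ref{remark:second-moment} the NSD property forces the cross-covariances of the $t_j$ to be nonpositive, so $\Var(\tau_i)\le\sum_{j=1}^i\Var(t_j)\le C i/N^2$, while $(H_2)$ controls the individual second moments. A maximal inequality for NSD sequences (as in Shen et al.~\cite{S1,S2}) then bounds $\E\bigl(\max_{i\le N}|\tau_i-\tfrac iN|^2\bigr)$, after which a Borel--Cantelli argument (along a suitable subsequence, since $(H_2)$ only guarantees moments of order $2+r$) upgrades this to the almost sure convergence $\delta_N\to0$. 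This step is the main obstacle: the limited moments and the triangular-array nature of the $t_i$ make the passage from $L^2$ control to almost sure uniform convergence the delicate point, and it is exactly here that negative dependence is indispensable.

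Granting $\delta_N\to0$, the remaining steps are deterministic given a continuous path $W$. For the denominator I would write $\tau_i^2=(i/N)^2+(\tau_i-\tfrac iN)(\tau_i+\tfrac iN)$; the first piece gives $\frac1N\sum_{i=1}^N(i/N)^2\to\int_0^1 s^2\,ds=\tfrac13$ by Riemann summation, while the second piece is bounded in absolute value by $\delta_N\cdot\frac1N\sum_{i=1}^N(\tau_i+\tfrac iN)\le C\,\delta_N\to0$. Hence $\frac1N\sum_{i=1}^N\tau_i^2\to\tfrac13$ almost surely.

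For the numerator I would apply summation by parts (recalling $\tau_0=0$), which yields
\[
\sum_{i=1}^{N}\tau_i\left(W_{\tau_i}-W_{\tau_{i-1}}\right)=\tau_N W_{\tau_N}-t_1 W_0-\sum_{i=1}^{N-1}t_{i+1}W_{\tau_i}.
\]
From $\delta_N\to0$ we get $|\tau_N-1|\le\delta_N\to0$, so $\tau_N W_{\tau_N}\to W_1$ by continuity of $W$; likewise $|t_1-\tfrac1N|\le\delta_N$ gives $t_1\to0$ and $t_1W_0\to0$. The last sum is a left-endpoint Riemann sum of $W$ over the partition $0<\tau_1<\dots<\tau_N$, whose mesh $\max_{2\le i\le N}t_i\le\tfrac1N+2\delta_N\to0$ and whose endpoints satisfy $\tau_1\to0$, $\tau_N\to1$; continuity (hence Riemann integrability and local boundedness) of $W$ then forces it to converge to $\int_0^1 W_s\,ds$. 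Combining, the numerator tends to $W_1-\int_0^1 W_s\,ds$, and dividing by the limit $\tfrac13$ of the denominator,
\[
N\left(\hat a_N-a\right)\to 3\left(W_1-\int_0^1 W_s\,ds\right)=3\int_0^1\left(W_1-W_s\right)ds,
\]
as claimed.
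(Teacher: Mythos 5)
Your reduction of the whole theorem to the single uniform law $\delta_N=\max_{1\le i\le N}\left|\tau_i-\tfrac{i}{N}\right|\to 0$ almost surely is sound, and \emph{granting} that law, your denominator and numerator arguments (Riemann summation plus summation by parts) are correct. The genuine gap is in how you propose to prove the law itself. From the variance bound $\Var(\tau_i)\le Ci/N^2$ and an $L^2$ maximal inequality you only get $\E(\delta_N^2)\le C/N$, hence $\mathbb{P}(\delta_N>\epsilon)\le C\epsilon^{-2}N^{-1}$, which is not summable, so Borel--Cantelli over the full sequence fails; and the subsequence remedy you invoke is not available here. The reason is structural: the $t_j$ form a triangular array --- their mean is $1/N$, so the entire collection $\{t_j\}$ is regenerated as $N$ changes (the paper explicitly notes this dependence is suppressed in the notation). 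Almost sure convergence of $\delta_{N_k}$ along a sparse subsequence therefore says nothing about $\delta_N$ for $N_k<N<N_{k+1}$: those quantities are functionals of \emph{different} random variables, and there is no nesting, monotonicity, or pathwise comparison with which to interpolate, unlike in the classical strong-law setting where the partial sums of one fixed sequence are nested in each other.

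The gap is fixable with a tool already in the paper, and in fact no subsequence device is needed: $(H_2)$ provides moments of order $2+r>2$, and the Rosenthal-type inequality \eqref{ros} for NSD sequences is already a \emph{maximal} inequality. Applying it with $p=2+r$ to $X_j=t_j-\tfrac1N$ gives $\E(\delta_N^{2+r})\le C\left(N\cdot N^{-2-r}+\left(N\cdot N^{-2}\right)^{(2+r)/2}\right)\le CN^{-1-r/2}$, whence $\mathbb{P}(\delta_N>\epsilon)\le C\epsilon^{-(2+r)}N^{-1-r/2}$ is summable in $N$ and Borel--Cantelli applies along the whole sequence; this is precisely the mechanism behind the paper's Proposition \ref{prop:NSD-as-limits} and Proposition \ref{prop:D-part}. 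With that repair, your argument is complete and genuinely different from the paper's: the paper never introduces $\delta_N$, but instead splits $\sum_{j=1}^{N}$ at $N(1)$, reuses the proof of Theorem \ref{thm:N1} for the block up to $N(1)$, controls the leftover block by $\sup_{0\le s\le\tau_N}|W_s|\,|\tau_N-\tau_{N(1)}|\to 0$ via Lemma \ref{lma:N1-N-ratio}, and handles the denominator by Proposition \ref{prop:D-part}. Your route avoids $N(1)$ entirely, at the cost of needing the uniform deviation bound, which is slightly stronger than the paper's stated lemmas but follows from the same inequality.
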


\begin{remark} {Note that the classical fixed design model is included in our framework since in this case the variables $t_j$ are constant variables equal to $\frac{1}{N}$ and they satisfy hypotheses $(H_1)$, $(H_2)$ and $(H_3)$.}
Another type of random time sampling, called jittered sampling, is studied in \cite{vilar} and in \cite{araya1}, among others. Jittered sampling corresponds to the case where the sampling is nearly regular in the sense that the fluctuations from the mean are small compared to the sampling interval. More precisely, in jittered case the random times $\{ \tau_{j} \}_{j \in \mathbb{N}}$ are given by
$$
\tau_j = \frac{j}{N} + \frac{X_j}{N}, \,\, j=1, \ldots, N-1
$$
where for each $j=1,2,\ldots, N-1$ the random variable $X_j$ is supported on $\left(-\frac12,\frac12\right)$ and has zero expectation. In order to extend our results to cover this case as well, a careful examination of our proofs below reveals that the statements of Theorem \ref{thm:N1} and Theorem \ref{thm:N} are valid, for any continuous $W$, provided that, as $N\to \infty$, we have:
\begin{itemize}
	\item $
	\max_{1\leq k\leq 2N}|\tau_k - \tau_{k-1}| \to 0,
	$
	\item $\tau_N \to 1$ and $\tau_{N(1)}\to 1$,
	\item $\frac{N(1)}{N}\to 1$, and
	\item $\frac{1}{N}\sum_{i=1}^ N \tau_i^2 \to 1/3$.% where $C_0$ is a positive constant.
\end{itemize}
It is straightforward to check that all these conditions are satisfied in the jittered case as well regardless on the dependence structure within the random variables $(X_j)_{j\in \mathbb{N}}$, complementing our work by covering the jittered case as well.
\end{remark}

\begin{remark}
{Note that, by following the techniques used in this paper, our results could be extended to cover several generalisations of our model. For example, one could add an intercept parameter to  \eqref{reg} leading to a model
\begin{equation*}\label{reg-int}
Y_{\tau_{i}} =   \alpha + a \tau_{i} + \varepsilon_{\tau_{i}}, \quad i=1,\ldots, N(1).
\end{equation*}
	Another interesting generalisations would be time series models such as an AR(1) model
	\begin{equation*}\label{AR1}
	Y_{\tau_{i+1}} =  \theta Y_{\tau_{i}}  + \varepsilon_{\tau_{i}}, \quad i=1,\ldots, N(1)
	\end{equation*}
or non-parametric regression model given by
		\begin{equation*}\label{NP}
	Y_{\tau_{i}} =  r(x)  + \varepsilon_{\tau_{i}}, \quad i=1,\ldots, N(1).
	\end{equation*}
In addition, one could easily extend our results to cover multivariate models with suitable correlation structures within the components as well.}
\end{remark}

\section{Simulation}\label{sim}
In this section we illustrate the performance of the estimator  $\hat{a}_{N(1)}$ defined in \eqref{eq:estimator-difference}. For the process $W$ we take the fractional Brownian motion and the variables $t_j$ follow an NSD log-normal distribution.
  \\
  \ \\
 {\bf The fractional Brownian motion as the noise:}
We take $W = B^H$, where $B^H$ is a fractional Brownian motion with Hurst parameter $H\in\{0.1,0.5,0.9\}$.
This process is one of the most popular Gaussian stochastic processes with memory and the case $H=0.5$ corresponds to independent residuals. % In our  simple regression model given by \eqref{reg} the  memory  could  be  represented through the noise defined as $W_{\tau_{i}} = B^H_{\tau_{i}}$ with $H \in (0 , 1)$.
We recall  the main properties of the stochastic  process $B^H$:
\begin{itemize}
	\item $B^H$ is a Gaussian selfsimilar stochastic process with $B^H_0=0$.
	\item The covariance of $B^H$ is given by $R_H (t, s) = \mathbb{E}(B_t^H B_s^H ) = \frac{ \sigma^2}{2} \left(  t^{2H} + s^{2H} - |t-s|^{2H}  \right) $.
\end{itemize}
In this case the variable $3\int_0^1 (W_1 - W_s) ds$ that appears in Theorems~\ref{thm:N1} and~\ref{thm:N} is normally distributed with mean $0$ and variance $\sigma_H=9/(2H+2)$.\\
\ \\
{\bf Distribution of the random times:}
In the following we specify how we obtain $t_1,\ldots,t_{N(1)}$ and $\tau_1,\ldots,\tau_{N(1)}$. {Since $N(1)$ is obtained by \eqref{eq:N(1)}, we have to generate a number $N_0\in\mathbb{N}$ of variables $t_j$ and $\tau_j$ large enough to ensure $N(1)$ observations. In fact, in our simulation study  $N(1)$ will be obtained by
$$N(1)=\sum_{j=1}^{N_0} 1_{\tau_j\le 1}.$$}
We simulate multivariate normal variables $(X_1 , \ldots , X_{N_0})$ with mean $\bf{\mu}=(\mu_1,\ldots,\mu_{N_0})$ and covariance matrix $\Sigma =(\sigma_{i,j})_{i,j=1, \ldots ,  N_0}$. The mean and covariance are given, for $i,j\in\{1,\ldots,N_0\}$, by $\mu_i=-\log(N)-1/2$, $\sigma_{i,i}=1$, and {$\sigma_{i,j}=-1/4, j\neq i$.} For $i=1,\ldots,N_0$ we define the variables $t_i=\exp{X_i}$. These variables are log-normal with mean $\frac{1}{N}$ (see $(H_1)$).
It can be easily checked also that the variables $t_i$ satisfy assumptions $(H_2)$ and $(H_3)$. The random times $\tau_1,\dots,\tau_{N(1)}$ are then obtained by \eqref{eq:N(1)} and \eqref{tauin}. \\
\ \\
Finally, for the model parameter we set $a=1$. Realisations of the data $Y_{\tau}$ for $N=1000$ and $H\in\{0.1,0.5,0.9\}$ are plotted in Figure~\ref{data}. Due to the H\"older regularity of the fractional Brownian motion $B^H$, the data are more noisy for lower values of $H$.
\begin{figure}[htp!]
		\centering
	\begin{subfigure}
		\centering
		% include first image
		\includegraphics[width=3.9cm]{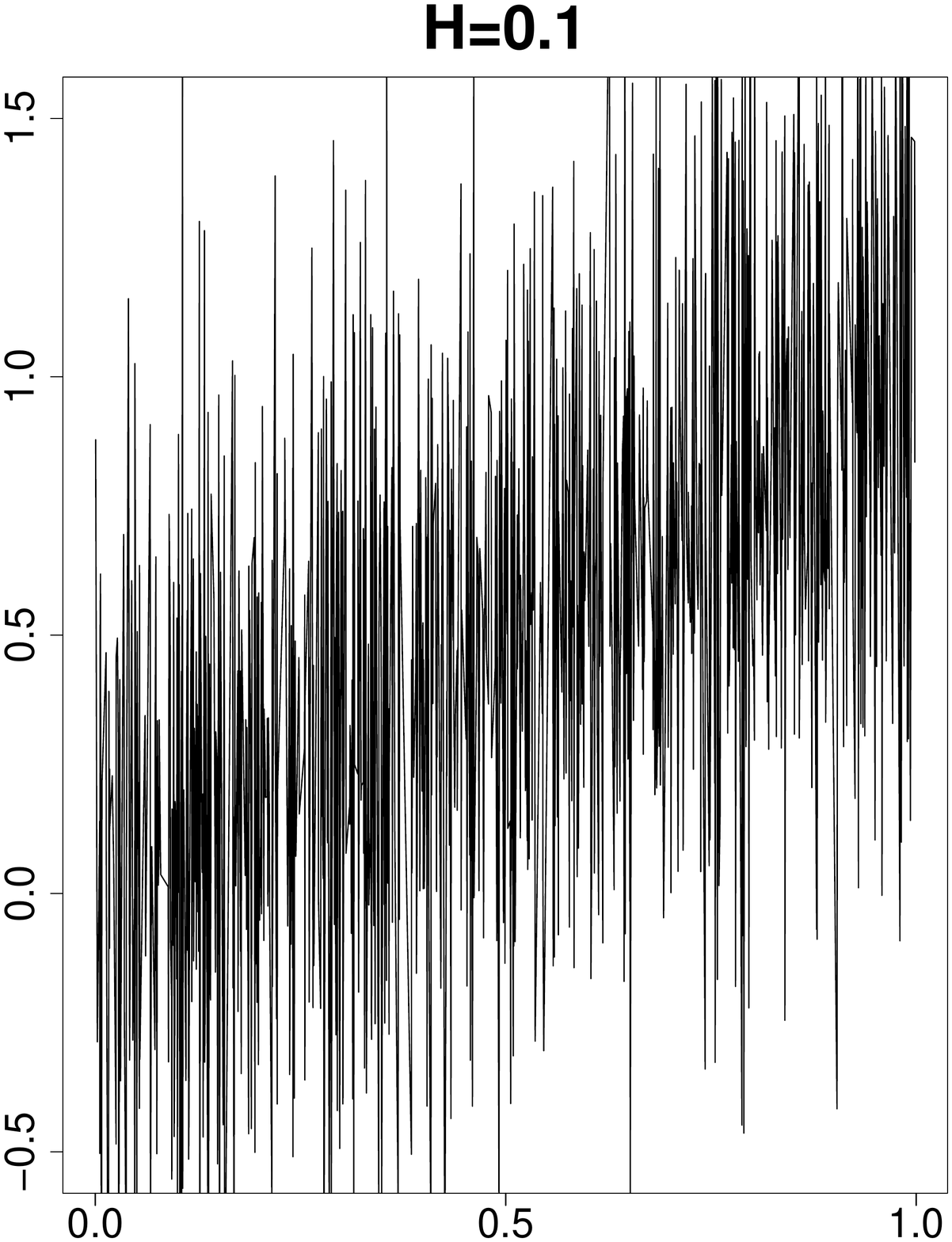}
		%\caption{N=100}
		%\label{first}
	\end{subfigure}
	\begin{subfigure}
		\centering
		% include second image
		\includegraphics[width=3.9cm]{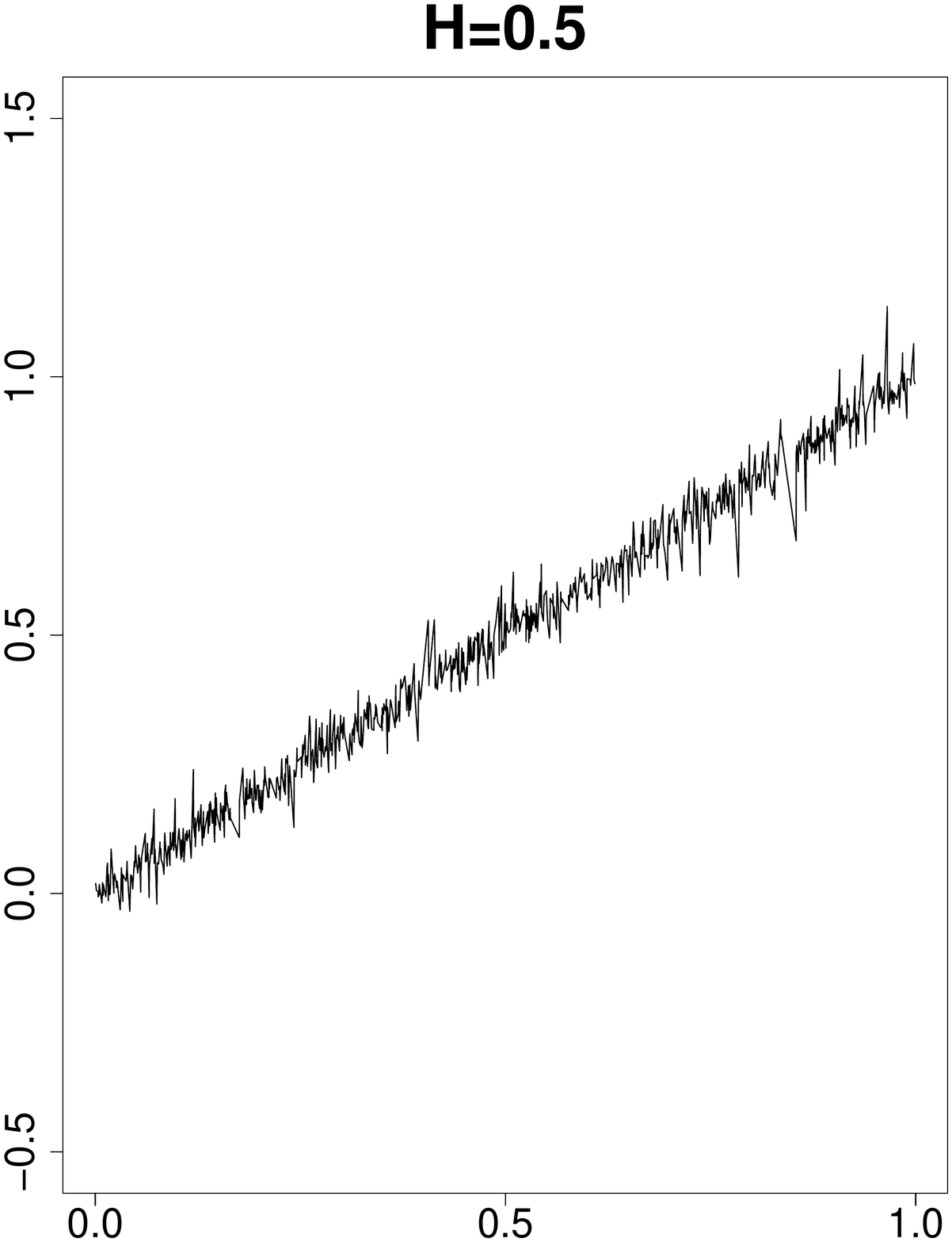}
	%	\caption{}
	%	\label{second}
	\end{subfigure}
	\begin{subfigure}
	\centering
	% include second image
	\includegraphics[width=3.9cm]{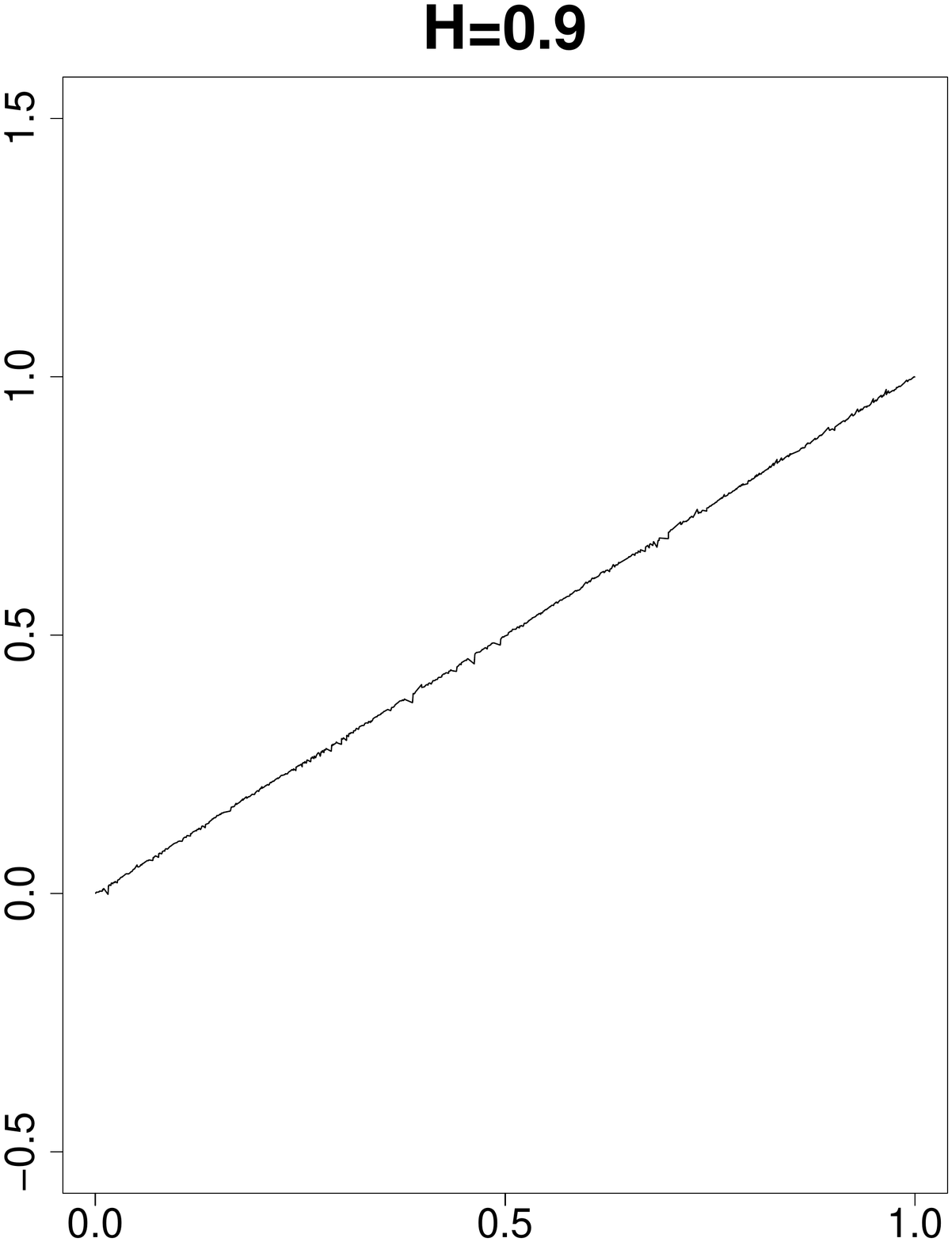}
		%\caption{N=5000}
%	\label{second}
\end{subfigure}
	\caption{Regression data $Y_\tau$ for $N=1000$ and $H\in\{0.1,0.5,0.9\}$.}\label{data}
\end{figure}

We consider three possible values for $N$: $N=100$, $N=1000$, and  $N=5000$. In order to assess the finite sample properties of the estimator, we conduct a Monte Carlo simulation study with a number of replications $M=1000$.

The absolute mean bias over the $M=1000$ {replications}, defined for the $k$:th replication by
$$\left|\frac{1}{M}\sum_{k=1}^M\hat{a}_{N(1)}^{(k)}-a\right|,$$
for different values of $N$ and $H$ is presented in Table \ref{table.CompM}. As expected, the bias decreases as $N$ increases for all values of $H$.

The $L_1$ risk of the estimator over the $M=1000$ {replications}, defined for the $k$:th replication by
$$\frac{1}{M}\sum_{k=1}^M\left|\hat{a}_{N(1)}^{(k)}-a\right|,$$
is presented in Table \ref{table.CompMbis}. The estimated variance of {$N(1)(\hat{a}_{N(1)}-a)$}, computed in a standard way by
$$\frac{1}{M}\sum_{k=1}^M \left( N(1)^{(k)}    \left[   \hat{a}_{N(1)}^{(k)}-a \right]  \right) ^2,$$
where $N(1)^{(k)}$ is the number of observations needed to reach the time $1$ in the $k$:th replication, is presented in Table \ref{tableVar}. It can be seen that, as expected, the estimated variance is close to the theoretical variance $9/(2H+2)$.

\begin{table}[htp!]
	\centering
\caption{\label{table.CompM} Absolute mean bias of the estimator for different values of $N$ and $H$. }
\vspace{0.3cm}
\fbox{
\begin{tabular}{cccc}
 & $H=0.1$ & $H=0.5$ & $H=0.9$\\
\hline
$N=100$ & 6.92 $\cdot 10^{-4}$ & 7.44 $\cdot 10^{-4}$& 1.69 $\cdot 10^{-4}$\\
$N=1000$ & 6.15 $\cdot 10^{-5}$ & 4.70 $\cdot 10^{-6}$ & 8.31 $\cdot 10^{-6}$\\
$N=5000$ & 9.66 $\cdot 10^{-6}$ & 3.97 $\cdot 10^{-6}$ & 3.11 $\cdot 10^{-6}$\\
\end{tabular}}
\end{table}
\begin{table}[htp!]
 	\centering
 \caption{\label{table.CompMbis} $L_1$ risk of the estimator for different values of $N$ and $H$. }
 \vspace{0.3cm}
 \fbox{
 \begin{tabular}{cccc}
  & $H=0.1$ & $H=0.5$  & $H=0.9$\\
 \hline
 $N=100$ & 1.65 $\cdot 10^{-2}$ &1.47 $\cdot 10^{-2}$& 1.25 $\cdot 10^{-2}$\\
 $N=1000$ & 1.68 $\cdot 10^{-3}$ & 1.39 $\cdot 10^{-3}$ & 1.25 $\cdot 10^{-3}$\\
 $N=5000$& 3.22 $\cdot 10^{-4}$ & 2.85 $\cdot 10^{-4}$& 2.39 $\cdot 10^{-4}$\\
 \end{tabular}}
 \end{table}
\begin{table}[htp!]
		\centering
\caption{\label{tableVar} Asymptotic variance of {$N(1)(\hat{a}_{N(1)}-a)$} for different values of $N$ and $H$ over the $M=1000$ replications.}
\vspace{0.3cm}
\fbox{
\begin{tabular}{cccc}
Theoretical & $H=0.1$ & $H=0.5$ & $H=0.9$\\
  variance & 4.09& 3 & 2.37\\
\hline
Estimated variance & $H=0.1$& $H=0.5$ & $H=0.9$\\
$N=100$ & 4.221  & 3.250 & 2.352\\
$N=1000$ & 4.359  & 3.027 & 2.372\\
$N=5000$ & 4.09 &  3.13 &  2.19\\
\end{tabular}}
\end{table}
Figure~\ref{hist01}
shows the plots of the histograms for $N(1) \left( \hat{a}_{N(1)} -a \right)$ with values $N\in\{100,1000,5000\}$ and $H=0.1$, jointly with the density of the normal distribution with mean $0$ and variance $9/(2H+2)$ corresponding to the asymptotic distribution, cf. Theorem~\ref{thm:N1}. By plotting histograms for other values of $H$, one obtains similar figures and the asymptotic normality with mean $0$ and variance $9/(2H+2)$.
\begin{figure}[htp!]
		\centering
	\begin{subfigure}
		\centering
		% include first image
		\includegraphics[width=3.9cm]{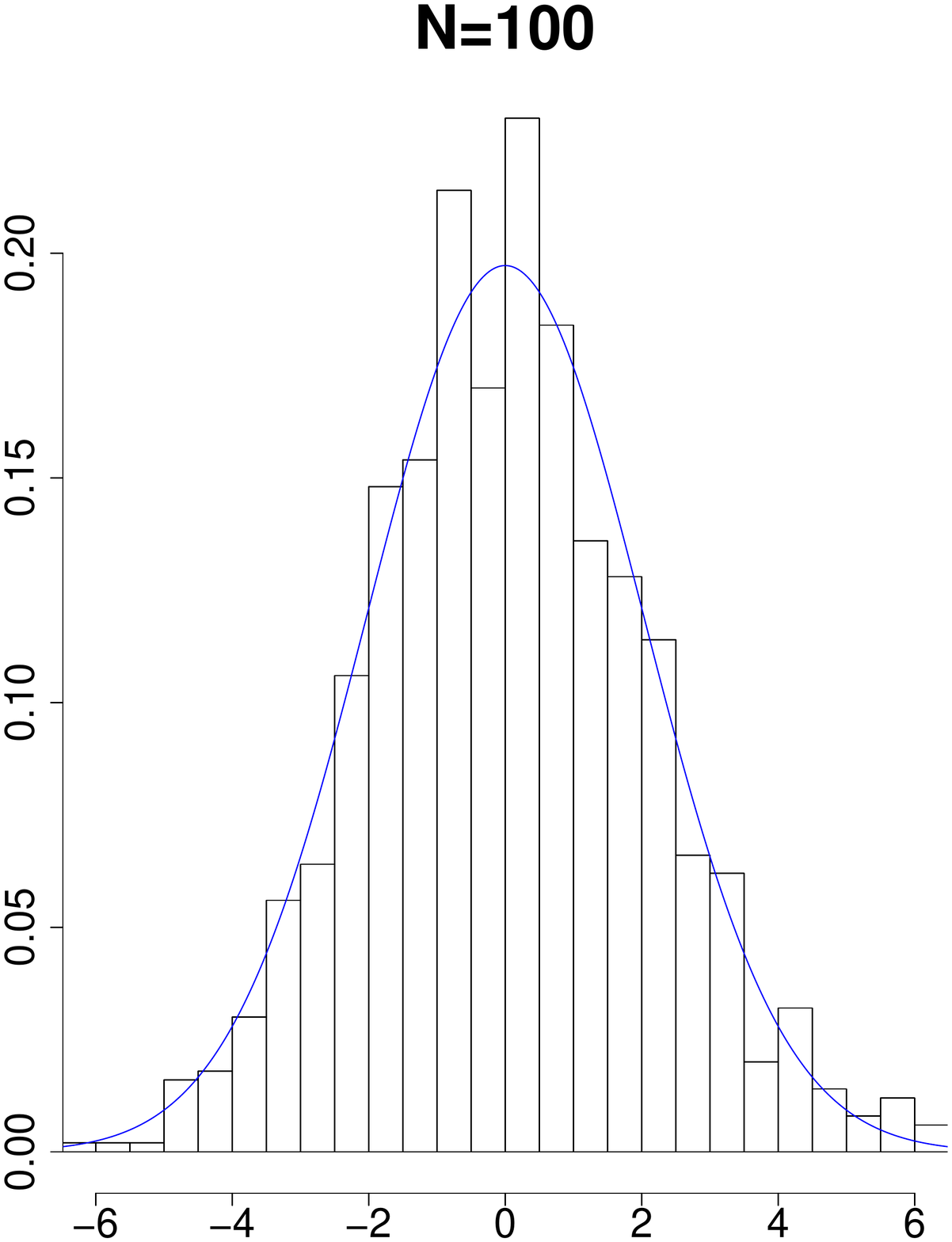}
		%\caption{N=100}
		%\label{first}
	\end{subfigure}
	\begin{subfigure}
		\centering
		% include second image
		\includegraphics[width=3.9cm]{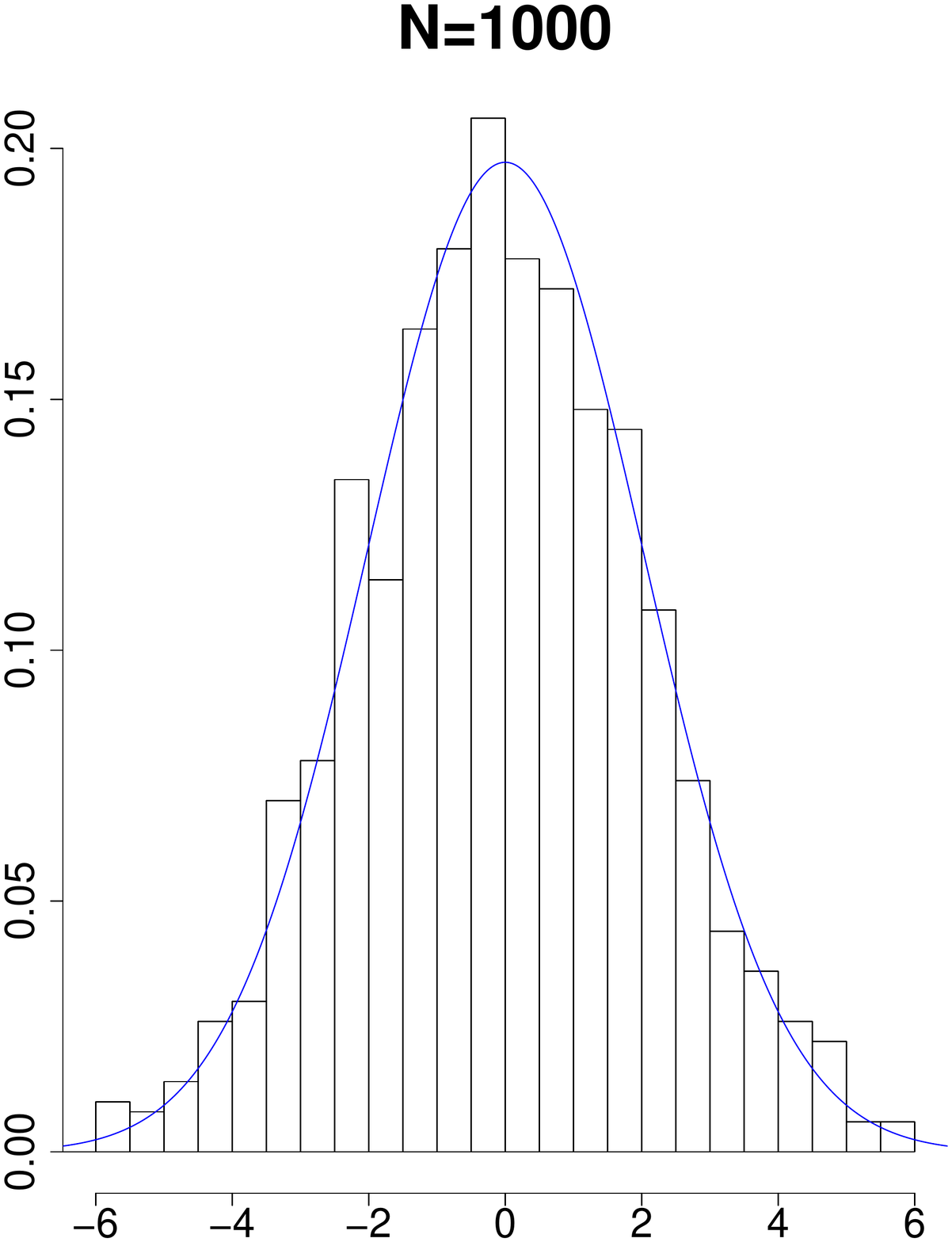}
	%	\caption{}
	%	\label{second}
	\end{subfigure}
	\begin{subfigure}
	\centering
	% include second image
	\includegraphics[width=3.9cm]{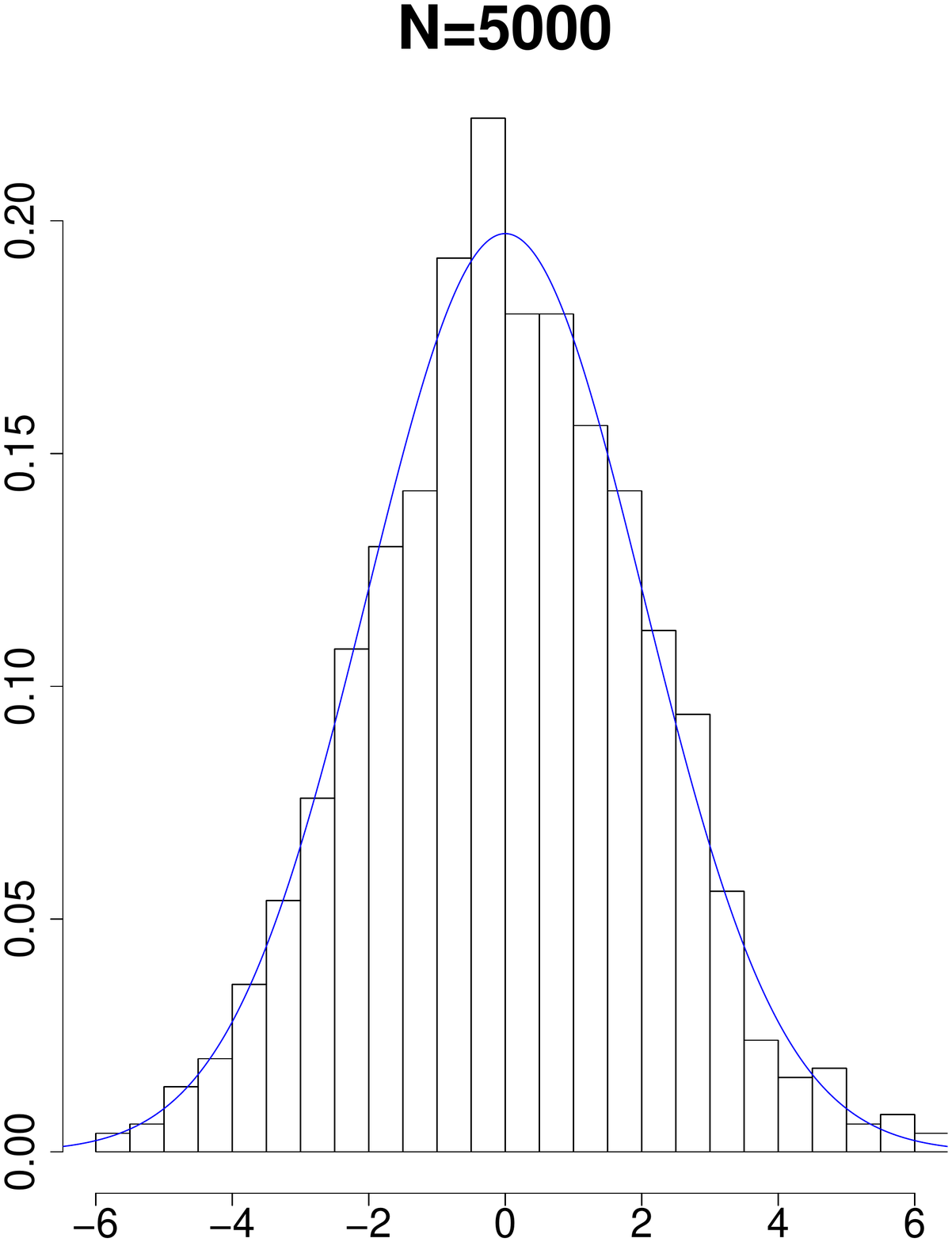}
		%\caption{N=5000}
%	\label{second}
\end{subfigure}
	\caption{Histograms for $N(1) \left( \hat{a}_{N(1)} -a \right) $ for  $H=0.1$ and $N\in\{100,1000,5000\}$.}\label{hist01}
\end{figure}
Finally, Figure~\ref{figN1} shows the ratio between the numbers $N(1)$ and $N$. It is clearly visible in Figure \ref{figN1} that as $N$ increases, the ratio $N(1)/N$ is more concentrated around $1$. This is in line with Theorems \ref{thm:N1} and \ref{thm:N} (see also Lemma \ref{lma:N1-N-ratio} below). Again we have only plotted the case $H=0.1$, while similar phenomena can be observed for other values of $H$ as well.
\begin{figure}[htp!]
		\centering
	\begin{subfigure}
		\centering
		% include first image
		\includegraphics[width=3.9cm]{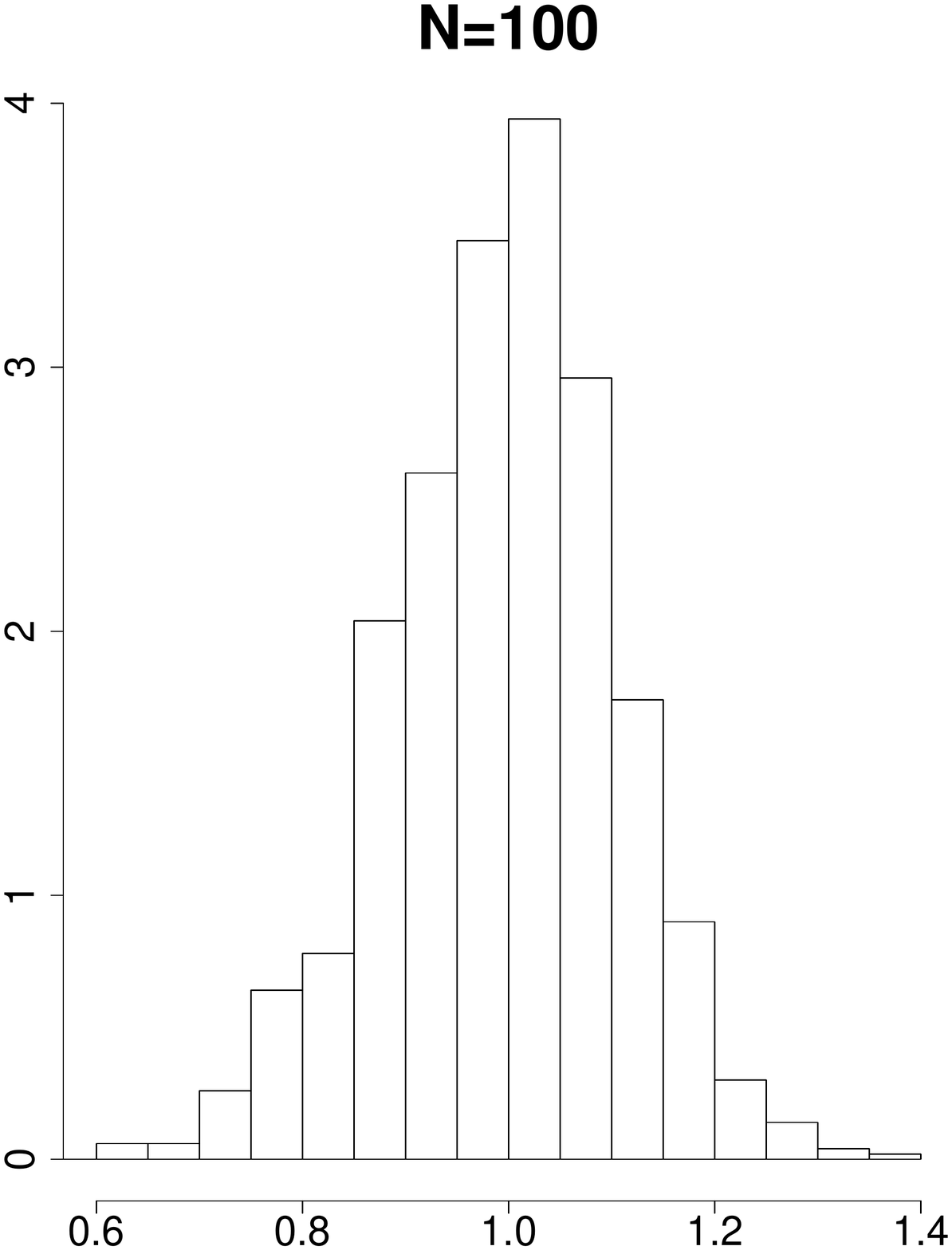}
		%	\caption{.}
	\end{subfigure}
	\begin{subfigure}
		\centering
		% include second image
		\includegraphics[width=3.9cm]{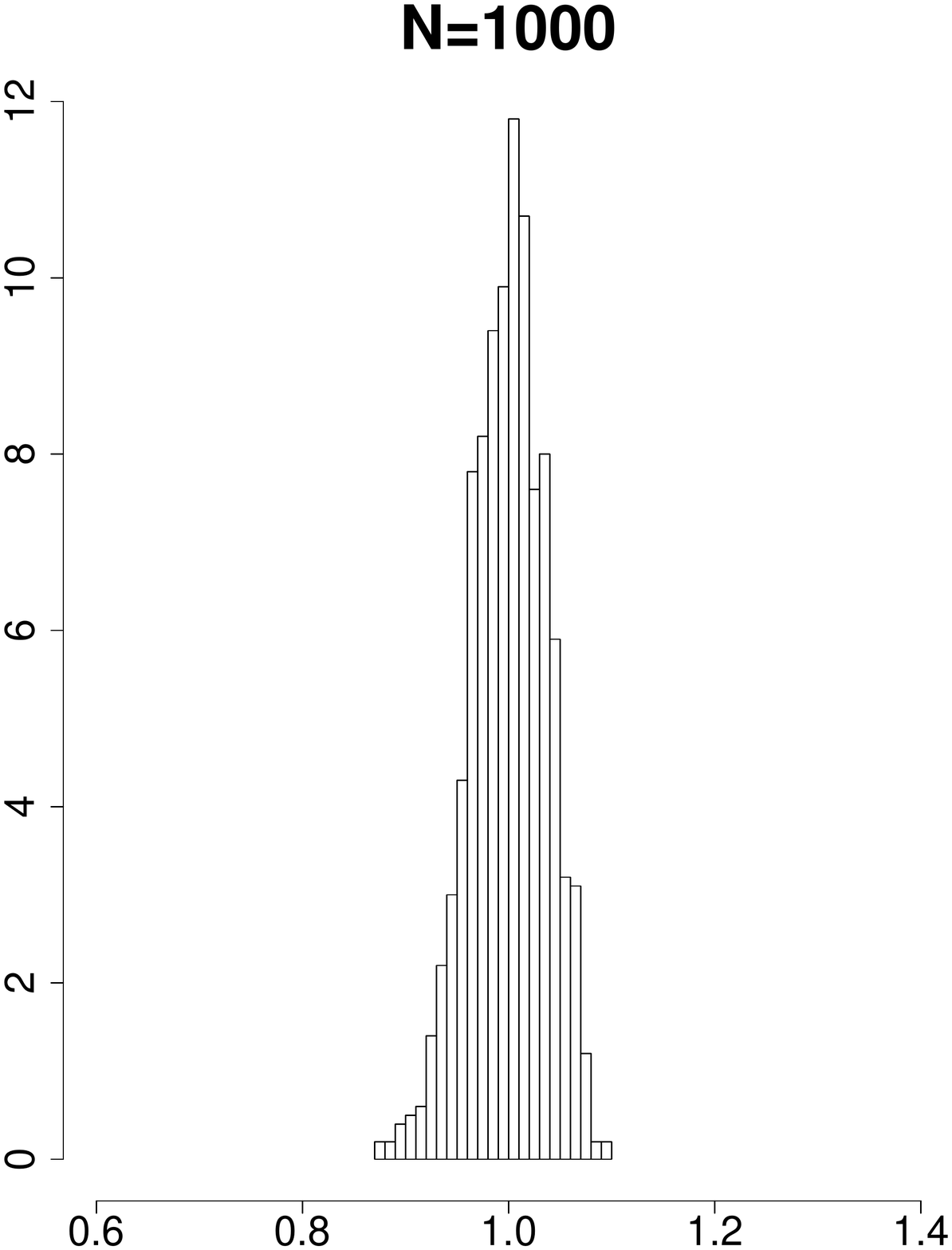}
		%	\caption{}
	\end{subfigure}
	\begin{subfigure}
		\centering
		% include second image
		\includegraphics[width=3.9cm]{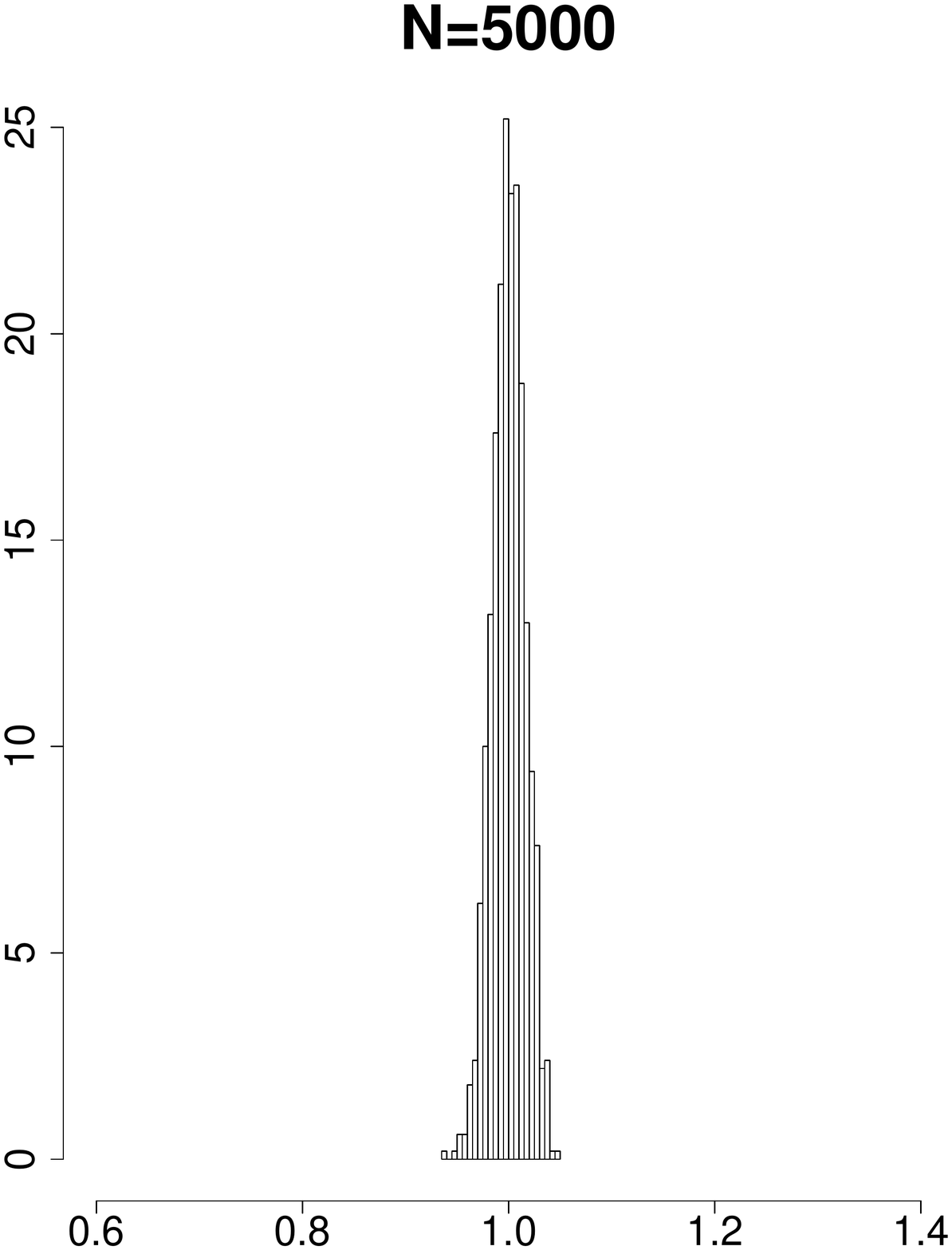}
		%	\caption{}
	\end{subfigure}
	\caption{Histograms for $N(1) /N $  for  $H=0.1$ and $N\in\{100,1000,5000\}$.}\label{figN1}
\end{figure}

\section{Proofs}\label{proofs}
This section is devoted to the proofs of our main results, Theorem \ref{thm:N1} and Theorem \ref{thm:N}. We begin by presenting some auxiliary results, while the proofs of the main results are presented in Subsection \ref{subsec:main-proof}. Throughout the proofs we denote by $C$ a generic unimportant constant which may vary from line to line.
\subsection{Auxiliary results}
\begin{lem}
\label{lma:maximum}
Suppose that the sequence  $\{ t_{j} \}_{j \in \mathbb{N}}$   satisfies hypothesis  $(H_2)$. Then we have $ \displaystyle \max_{1\leq j\leq N} t_{j} \to 0$ almost surely as $N \to \infty$.
\end{lem}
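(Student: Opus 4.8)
The plan is to use a union bound together with the Borel--Cantelli lemma, exploiting the fact that $(H_2)$ provides a moment of order strictly larger than $2$. Since the $t_j$ are non-negative, it suffices to show that for every $\varepsilon>0$ the event $\{\max_{1\le j\le N} t_j > \varepsilon\}$ occurs for only finitely many $N$, almost surely.

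First I would fix $\varepsilon>0$ and bound the tail of the maximum by a union bound,
\[
\mathbb{P}\Big(\max_{1\le j\le N} t_j > \varepsilon\Big) \le \sum_{j=1}^N \mathbb{P}(t_j > \varepsilon).
\]
Then, applying Markov's inequality to the random variable $t_j^{2+r}$ and invoking $(H_2)$, each summand satisfies
\[
\mathbb{P}(t_j > \varepsilon) = \mathbb{P}(t_j^{2+r} > \varepsilon^{2+r}) \le \frac{\mathbb{E}(t_j^{2+r})}{\varepsilon^{2+r}} \le \frac{\tilde{C}}{\varepsilon^{2+r} N^{2+r}}.
\]
Summing over the $N$ indices gives
\[
\mathbb{P}\Big(\max_{1\le j\le N} t_j > \varepsilon\Big) \le \frac{\tilde{C}}{\varepsilon^{2+r}}\, N^{-(1+r)}.
\]

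The crucial point is that, because $r>0$, the exponent $1+r$ strictly exceeds $1$, so these bounds are summable in $N$, i.e. $\sum_{N\ge 1} N^{-(1+r)} < \infty$. By the Borel--Cantelli lemma the events $\{\max_{1\le j\le N} t_j > \varepsilon\}$ therefore occur for at most finitely many $N$ almost surely; equivalently $\limsup_{N\to\infty} \max_{1\le j\le N} t_j \le \varepsilon$ almost surely. Taking $\varepsilon = 1/k$ and intersecting the corresponding full-probability events over $k\in\mathbb{N}$ yields $\limsup_{N\to\infty} \max_{1\le j\le N} t_j = 0$ almost surely, which is the claim.

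The only point requiring a little care is that, as noted after $(H_2)$, the variables $t_j$ form a triangular array depending on $N$; this causes no difficulty, however, since the Borel--Cantelli lemma only requires the events $A_N = \{\max_{1\le j\le N} t_j > \varepsilon\}$ to be defined on a common probability space with summable probabilities, which is exactly what the moment bound delivers. I expect no genuine obstacle here: the dependence hypothesis $(H_3)$ is not needed at all, and the whole argument rests on the single observation that a $(2+r)$-th moment of order $N^{-(2+r)}$ beats the factor $N$ produced by the union bound.
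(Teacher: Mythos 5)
Your proof is correct and follows essentially the same route as the paper: both bound $\mathbb{P}\left(\max_{1\le j\le N} t_j > \varepsilon\right)$ by the sum of individual tail probabilities (the paper phrases the union bound via the counting variable $S_N(\delta)=\sum_{j=1}^N \mathbf{1}_{t_j\ge\delta}$ and Markov's inequality, which is the same estimate), then apply the $(2+r)$-moment bound from $(H_2)$ to get a summable bound of order $N^{-1-r}$, and conclude by Borel--Cantelli. Your explicit handling of the triangular-array issue and the intersection over $\varepsilon = 1/k$ is a careful spelling-out of what the paper leaves implicit.
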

\begin{proof}
Let $\delta>0$ be fixed and denote
$$
S_N(\delta) = \sum_{j=1}^N \textbf{1}_{t_{j}\geq \delta}.
$$
Then we have %$\displaystyle  \max_{1\leq j\leq N} t_{j}>\delta$ if and only if $S_N(\delta) \geq 1$. Consequently,
\begin{equation*}
\begin{split}
{\mathbb{P}\left(\max_{1\leq j\leq N} t_{j}>\delta\right) }
&\leq   \mathbb{P}\left(S_N(\delta) \geq 1\right)\\
&\leq  \E (S_N(\delta))  =  \sum_{j=1}^N \mathbb{P}\left(t_{j}\geq \delta\right) \\
&\leq  \delta^{-2-r}\sum_{j=1}^N \E \left(t_{j}^{2+r}\right).
\end{split}
\end{equation*}
Hence, by $(H_2)$, we obtain
$$
\mathbb{P}\left(\max_{1\leq j\leq N} t_{j}>\delta\right) \leq \delta^{-2-r}\tilde{C} N^{-1-r}
$$
and consequently,
$$
\sum_{N=1}^\infty\mathbb{P}\left(\max_{1\leq j\leq N} t_{j}>\delta\right) < \infty.
$$
The claim follows from Borel--Cantelli lemma.
\end{proof}
Note that if the sequence  $\{t_{j} \}_{j \in \mathbb{N}}$ is  NSD, then random series based on $t_{j}$ behaves essentially {as random series based on the  independent $t^*_{j}$'s that satisfy Definition~\ref{def:hu}}. In particular, taking $X_j=t_{j}-\mathbb{E}(t_{j})$, we have the Rosenthal inequality (see \cite{S2}) for any $p>2$
{\begin{equation}\label{ros}
\mathbb{E}\left(\max_{1\le k\le n} |\sum_{j=1}^k  X_j|^p\right)\le 2\left(\frac{15 p}{\log p}\right)^p \left[ \sum_{j=1}^n \mathbb{E}|X_j|^p+\left(\sum_{j=1}^n \mathbb{E}(X_j^2)  \right)^{p/2}\right],
\end{equation}}
provided that $p>2$ and $\mathbb{E}(t_{j}^p)<\infty$. Moreover, for $p=2$ we have
{\begin{equation}\label{var}
\mathbb{E}\left( \left(\sum_{j=1}^n  X_j\right)^2\right)\le \sum_{j=1}^n \mathbb{E}(X_j^2)  .
\end{equation}}
This follows directly from the fact that
{$$
(x_1,\ldots,x_{n}) \mapsto \left[\sum_{j=1}^{n} \left[x_j - \E t_{j}\right]\right]^2
$$}
is superadditive, which can be seen by taking partial derivatives and using Lemma \ref{lem:kemp}. We also use the following proposition in the proofs of our main results. While the result could be obtained by verifying several conditions of Theorem 3.2 in \cite{S2}, here we present a more concise proof for the reader's convenience.
\begin{proposition}
\label{prop:NSD-as-limits}
Suppose that the sequence  $\{ t_{j} \}_{j \in \mathbb{N}}$ {satisfies} $(H_1)$-$(H_3)$ and that $g:\mathbb{N} \mapsto \mathbb{N}$ is a function such that, as $N\to \infty$, we have
$$
\frac{g(N)}{N} \to L
$$
for some $L>0$.
Then
$
\sum_{j=1}^{g(N)} \left[t_{j} - \E t_{j}\right]
$
converges almost surely to 0 as $N\to \infty$ and consequently,
$$
\sum_{j=1}^{g(N)} t_{j} \to L
$$
almost surely.
\end{proposition}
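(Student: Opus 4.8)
The plan is to reduce the whole statement to an almost sure convergence result for the centred partial sums, and then recover the second assertion by adding back the deterministic means. Write $X_j = t_j - \E t_j = t_j - \frac{1}{N}$ and $S_{g(N)} = \sum_{j=1}^{g(N)} X_j$. The "consequently" part is then immediate: by $(H_1)$ we have $\sum_{j=1}^{g(N)} \E t_j = g(N)/N \to L$, so once the first claim $S_{g(N)} \to 0$ almost surely is established, we get $\sum_{j=1}^{g(N)} t_j = S_{g(N)} + g(N)/N \to L$ almost surely. Hence the entire content of the proposition lies in proving $S_{g(N)} \to 0$ a.s.

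To prove this I would fix an exponent $p \in (2,2+r]$, say $p=2+r$, and control the $p$-th moment of $S_{g(N)}$ via the Rosenthal-type inequality \eqref{ros}, which is stated precisely for the centred variables $X_j$ and which applies here since, by $(H_2)$, these have finite $p$-th moments. This gives
$$
\E\left(|S_{g(N)}|^p\right) \le \E\left(\max_{1\le k\le g(N)}\Big|\sum_{j=1}^k X_j\Big|^p\right) \le C_p\left[\sum_{j=1}^{g(N)} \E|X_j|^p + \left(\sum_{j=1}^{g(N)} \E(X_j^2)\right)^{p/2}\right],
$$
with $C_p$ the explicit Rosenthal constant. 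I would then estimate the two sums separately. From Remark~\ref{remark:second-moment} one has $\E(X_j^2) \le \E(t_j^2) \le C N^{-2}$, while the elementary bound $|t_j - 1/N|^p \le 2^{p-1}(t_j^p + N^{-p})$ together with $(H_2)$ yields $\E|X_j|^p \le C N^{-p}$. Since $g(N)/N \to L$ forces $g(N) \le (L+1)N$ for large $N$, the first sum is $O(N^{1-p})$ and the second is $O(N^{-p/2})$.

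The decisive observation is that, because $p>2$, we have $1-p < -p/2$, so the $L^2$-term dominates and $\E(|S_{g(N)}|^p) \le C N^{-p/2}$. Applying Markov's inequality, for every $\varepsilon>0$,
$$
\mathbb{P}\left(|S_{g(N)}| > \varepsilon\right) \le \varepsilon^{-p}\,\E\left(|S_{g(N)}|^p\right) \le C\varepsilon^{-p} N^{-p/2},
$$
and since $p/2 > 1$ the right-hand side is summable in $N$. The Borel--Cantelli lemma then delivers $S_{g(N)} \to 0$ almost surely, which combined with the first paragraph finishes the proof.

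I expect the main difficulty to be organisational rather than deep. One must keep track of the triangular-array nature of the problem (the $t_j$, and hence the $X_j$, depend on $N$), so that the moment bounds have to be \emph{uniform} in $N$ — which is exactly what $(H_1)$ and $(H_2)$ guarantee — and the convergence must be obtained along the sequence indexed by $N$ through Borel--Cantelli, not through a classical strong law for a single fixed sequence. The only genuinely substantive point is that the availability of an exponent $p>2$, which is what makes the dominating $N^{-p/2}$ bound summable, hinges precisely on the assumption $r>0$; with $r=0$ the argument would break down at this final summability step.
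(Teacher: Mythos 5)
Your proposal is correct and follows essentially the same route as the paper's proof: the same moment bounds derived from $(H_1)$, $(H_2)$ and Remark~\ref{remark:second-moment}, the Rosenthal inequality \eqref{ros} with $p=2+r$, and then Markov plus Borel--Cantelli, with the "consequently" part handled identically via $\sum_{j=1}^{g(N)}\E t_j = g(N)/N \to L$. The only cosmetic difference is that you simplify by noting the $N^{-p/2}$ term dominates, whereas the paper keeps both terms (each summable since $r>0$) in its final estimate \eqref{eqconv}.
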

\begin{proof}
We have
$$\E((t_{j}-\E t_{j})^2)\le \frac{\tilde{C}^{2/(2+r)}}{N^2}$$
and
$$\E((t_{j}-\E t_{j})^{2+r})\le 2^{r-1}\left( \E(t_{j})^{2+r}+ (\E(t_{j}))^{2+r} \right)\le C N^{-2-r}$$
Using \eqref{ros} with $p=2+r$, we have
\begin{equation}
\E\left( \left|\sum_{j=1}^{g(N)}  t_{j}-\E t_{j}\right|^{2+r}\right)\le C\left(g(N)N^{-2-r}+\left(g(N)N^{-2}  \right)^{1+r/2}   \right).
\end{equation}
Using $g(N)/N \to L$ we obtain that, for all $\varepsilon>0$,
\begin{equation}\label{eqconv}\sum_{N=1}^{\infty} \mathbb{P}\left(  \left|\sum_{j=1}^{g(N)}  t_{j}-\E t_{j}\right|\ge \varepsilon \right)\le \frac{1}{\varepsilon^{2+r}}\sum_{N=1}^{\infty}  \left(\frac{g(N)}{N}\right)N^{-r-1}+ \left(\frac{g(N)}{N}\right)^{1+r/2}N^{-1-r/2} <\infty.\end{equation}
This together with Borel-Cantelli lemma implies that $\sum_{j=1}^{g(N)}  (t_{j}-\E t_{j})$ converges to $0$ almost surely. Finally, $\sum_{j=1}^{g(N)}  t_{j} \to L$ follows from the fact $\sum_{j=1}^{g(N)}\E t_{j}=g(N)/N$.

\end{proof}
With Proposition \ref{prop:NSD-as-limits}, we can deduce the following two lemmas.
\begin{lem}
\label{lma:tauN-limit}
Suppose that the sequence  $\{ t_{j} \}_{j \in \mathbb{N}}$ satisfy {hypotheses} $(H_1)$-$(H_3)$. Then, as $N\to \infty$, $\tau_{N} \to 1$ almost surely.
\end{lem}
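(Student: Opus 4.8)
The plan is to recognize this lemma as a direct special case of Proposition~\ref{prop:NSD-as-limits}, which already carries out all the substantive work. By the definition \eqref{tauin} of the observation times, we have $\tau_N = \sum_{j=1}^N t_j$, so the assertion $\tau_N \to 1$ is nothing but a statement about the partial sums $\sum_{j=1}^{g(N)} t_j$ evaluated along the identity function $g(N) = N$.

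First I would take $g(N) = N$ and check the sole hypothesis of Proposition~\ref{prop:NSD-as-limits}, namely that $g(N)/N \to L$ for some $L > 0$. This holds trivially with $L = 1$, since $g(N)/N = 1$ for every $N$. As $\{t_j\}_{j\in\mathbb{N}}$ satisfies $(H_1)$-$(H_3)$ by assumption, the proposition applies and yields at once that $\sum_{j=1}^N [t_j - \E t_j] \to 0$ and consequently $\sum_{j=1}^N t_j \to 1$ almost surely; rewriting the sum as $\tau_N$ delivers the claim.

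I do not expect any genuine obstacle here, since the real estimates---the Rosenthal inequality \eqref{ros} valid under NSD, the moment bounds furnished by $(H_1)$ and $(H_2)$, and the Borel--Cantelli argument---are all packaged inside the proof of Proposition~\ref{prop:NSD-as-limits}. The only point worth a moment's attention is confirming that the identity map is an admissible choice of $g$, which it is precisely because the resulting limit $L = 1$ is strictly positive, matching the proposition's requirement $L > 0$.
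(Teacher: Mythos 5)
Your proposal is correct and is essentially identical to the paper's own proof: both recognize $\tau_N = \sum_{j=1}^N t_j$ and apply Proposition~\ref{prop:NSD-as-limits} with $g(N)=N$, so that $L=1$ and the conclusion follows immediately.
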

\begin{proof}
By the very definition we have
$$
\tau_{N} = \sum_{j=1}^N t_{j}
$$
so it remains to apply Proposition \ref{prop:NSD-as-limits} with $g(N) = N$.
\end{proof}
\begin{lem}
\label{lma:N1-N-ratio}
Suppose that the sequence  $\{ t_{j} \}_{j \in \mathbb{N}}$ satisfy $(H_1)$-$(H_3)$ and let $N(1)$ be given by \eqref{eq:N(1)}. Then, as $N\to \infty$, we have
$
\frac{N(1)}{N} \to 1
$
and $|\tau_{N(1)}-\tau_{N}| \to 0$ almost surely.
\end{lem}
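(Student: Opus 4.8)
The plan is to first establish the ratio convergence $N(1)/N\to 1$ and then deduce $|\tau_{N(1)}-\tau_N|\to 0$ from it. The key structural observation is that, since the $t_j$ are positive, the sequence $\tau_i=\sum_{j=1}^i t_j$ is strictly increasing in $i$, so that the random index $N(1)$ defined in \eqref{eq:N(1)} is characterised by the sandwich $\tau_{N(1)}\le 1<\tau_{N(1)+1}$. This monotonicity is what lets me compare the random index $N(1)$ against deterministic indices of the form $\lfloor(1\pm\varepsilon)N\rfloor$, to which Proposition \ref{prop:NSD-as-limits} applies directly.

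For the first claim, fix $\varepsilon\in(0,1)$. Applying Proposition \ref{prop:NSD-as-limits} with $g(N)=\lfloor(1-\varepsilon)N\rfloor$ (so that $g(N)/N\to 1-\varepsilon$) gives $\tau_{\lfloor(1-\varepsilon)N\rfloor}\to 1-\varepsilon$ almost surely; since $1-\varepsilon<1$, almost surely for all large $N$ we have $\tau_{\lfloor(1-\varepsilon)N\rfloor}\le 1$, and by monotonicity this forces $N(1)\ge\lfloor(1-\varepsilon)N\rfloor$. Similarly, with $g(N)=\lfloor(1+\varepsilon)N\rfloor$ we obtain $\tau_{\lfloor(1+\varepsilon)N\rfloor}\to 1+\varepsilon>1$ almost surely, so eventually $\tau_{\lfloor(1+\varepsilon)N\rfloor}>1$, which forces $N(1)<\lfloor(1+\varepsilon)N\rfloor$ (and incidentally guarantees that $N(1)$ is finite and $\tau_{N(1)+1}$ well defined). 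Dividing by $N$ and letting $N\to\infty$ yields $1-\varepsilon\le\liminf_N N(1)/N\le\limsup_N N(1)/N\le 1+\varepsilon$ almost surely; since $\varepsilon$ is arbitrary, $N(1)/N\to 1$ almost surely.

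For the second claim I would reuse the defining inequalities: from $\tau_{N(1)}\le 1<\tau_{N(1)+1}=\tau_{N(1)}+t_{N(1)+1}$ I obtain $|\tau_{N(1)}-1|<t_{N(1)+1}$. Since $N(1)/N\to 1$, almost surely $N(1)+1\le 2N$ for all large $N$, so it suffices to show $\max_{1\le j\le 2N}t_j\to 0$ almost surely. This follows by repeating the Borel--Cantelli argument of Lemma \ref{lma:maximum} verbatim, the only change being an extra harmless factor of $2$ coming from $(H_2)$, namely $\mathbb{P}\left(\max_{1\le j\le 2N}t_j>\delta\right)\le 2\delta^{-2-r}\tilde{C} N^{-1-r}$, which is still summable in $N$. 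Hence $t_{N(1)+1}\to 0$, giving $\tau_{N(1)}\to 1$, and combining with $\tau_N\to 1$ from Lemma \ref{lma:tauN-limit} yields $|\tau_{N(1)}-\tau_N|\le|\tau_{N(1)}-1|+|1-\tau_N|\to 0$ almost surely.

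The main subtlety to handle carefully is that $N(1)$ is a \emph{random} index depending on $N$, so the convergence statements for $\tau_{g(N)}$ with deterministic $g$ supplied by Proposition \ref{prop:NSD-as-limits} cannot be applied to $\tau_{N(1)}$ directly; the monotonicity-based sandwich against $\lfloor(1\pm\varepsilon)N\rfloor$ is exactly what circumvents this. The only other point requiring attention is the mild extension of Lemma \ref{lma:maximum} to the range $1\le j\le 2N$, which is needed to control the single increment $t_{N(1)+1}$ sitting at the random index adjacent to $N(1)$.
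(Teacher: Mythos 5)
Your proof is correct and takes essentially the same route as the paper: both arguments compare the random index $N(1)$ against the deterministic indices $\lfloor(1\pm\varepsilon)N\rfloor$ via Proposition \ref{prop:NSD-as-limits} together with the defining inequalities $\tau_{N(1)}\le 1<\tau_{N(1)+1}$, and both control $t_{N(1)+1}$ by $\max_{1\le j\le 2N}t_j$ through the Borel--Cantelli argument of Lemma \ref{lma:maximum} before invoking Lemma \ref{lma:tauN-limit}. The only cosmetic difference is that you establish the sandwich directly (almost surely, eventually in $N$), whereas the paper phrases the same comparison as a contradiction showing the exceptional events $A_\pm(\epsilon)$ are null.
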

\begin{proof}
For a fixed $\epsilon>0$, define sets $A_+(\epsilon)$ and $A_-(\epsilon)$ by
$$
A_+(\epsilon) = \left\{\omega : \limsup_{N\to \infty} \frac{N(1)}{N} \geq 1+\epsilon\right\}
$$
and
$$
A_-(\epsilon) = \left\{\omega : \liminf_{N\to \infty} \frac{N(1)}{N} \leq 1-\epsilon\right\}.
$$
If $\omega \in A_+(\epsilon)$, then we can find arbitrary large numbers $N$ such that $N(1) \geq (1+\epsilon)N$ leading to
$$
\tau_{N(1)} = \sum_{j=1}^{N(1)} t_{j} \geq \sum_{j=1}^{\lfloor (1+\epsilon)N\rfloor}t_{j}.
$$
On the other hand, we obtain by Proposition \ref{prop:NSD-as-limits} as $N \to \infty$ that
$$
\sum_{j=1}^{\lfloor (1+\epsilon)N\rfloor }t_{j} \to 1+\epsilon.
$$
Since $\tau_{N(1)} \leq 1$ by the very definition of $N(1) = \max_{i \in \mathbb{N}} \{\tau_i :  \,\, \tau_i \leq 1\}$, it follows that $\mathbb{P}(A_+(\epsilon)) = 0$. Similarly, for $\omega \in A_-(\epsilon)$ we can find arbitrary large $N$ such that $N(1)\leq (1-\epsilon)N$ and get
$$
\tau_{N(1)+1} \leq \sum_{j=1}^{\lfloor (1-\epsilon)N\rfloor +1}t_{j}.
$$
We observe again by Proposition \ref{prop:NSD-as-limits} as $N \to \infty$,
$$
\sum_{j=1}^{\lfloor (1-\epsilon)N\rfloor +1}t_{j} \to 1-\epsilon.
$$
On the other hand, $\tau_{N(1)+1} \geq 1$ and thus $\mathbb{P}(A_-(\epsilon)) = 0$ as well. It follows that for any $\epsilon>0$ we have, almost surely,
$$
1-\epsilon < \liminf_{N\to \infty} \frac{N(1)}{N} \leq \limsup_{N\to \infty} \frac{N(1)}{N} < 1+\epsilon.
$$
Since $\epsilon>0$ is arbitrary, it follows that $\lim_{N\to \infty} \frac{N(1)}{N} = 1$ almost surely. For the second claim, by the very definition of $N(1)$ we get

$$
0\leq 1-\tau_{N(1)} \leq   \tau_{N(1) +1 } - \tau_{N(1)}  = t_{N(1)+1}
$$
leading to
$$
0\leq 1-\tau_{N(1)} \leq t_{N(1)+1}.
$$
Now since $\frac{N(1)+1}{N}\to 1$ almost surely, it follows that there exists $N_0 = N_0(\omega)$ such that $N(1) +1  \leq 2N$ for all $N\geq N_0$. This gives us
$$
t_{N(1)+1} \leq \max_{1\leq j \leq 2N} t_{j}
$$
which converges to zero by the arguments in the proof of Lemma \ref{lma:maximum}. Thus
$$
|1- \tau_{N(1)}| \to 0
$$
from where $|\tau_{N}-\tau_{N(1)}|\to 0$ follows by Lemma \ref{lma:tauN-limit}. This completes the proof.
\end{proof}
We end this subsection with the following proposition on the denominator in \eqref{eq:estimator-difference}.
\begin{proposition}
\label{prop:D-part}
Suppose $\{t_{j}\}_{j\in\mathbb{N}}$ satisfies $(H_1)$ to $(H_3)$. Then, almost surely as $N\to \infty$,
$$
\frac{1}{N} \sum_{k=1}^N \tau_{k}^2 \to \frac13.
$$
\end{proposition}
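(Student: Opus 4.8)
The plan is to exploit the fact that under $(H_1)$ each observation time has mean $\mathbb{E}(\tau_k)=k/N$, so the quantity should behave like the deterministic Riemann sum $\frac{1}{N}\sum_{k=1}^N (k/N)^2$, which converges to $\int_0^1 x^2\,dx=\frac13$. First I would write $\tau_k=\frac{k}{N}+R_k$, where $R_k=\sum_{j=1}^k\left(t_j-\mathbb{E}t_j\right)=\tau_k-\frac{k}{N}$ is the centered partial sum, and expand
$$
\frac{1}{N}\sum_{k=1}^N \tau_k^2 = \frac{1}{N}\sum_{k=1}^N \left(\frac{k}{N}\right)^2 + \frac{2}{N}\sum_{k=1}^N \frac{k}{N}R_k + \frac{1}{N}\sum_{k=1}^N R_k^2.
$$
The first term is purely deterministic and converges to $\frac13$ by an elementary computation, so the whole problem reduces to showing that the two remaining random terms vanish almost surely.

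The key quantity to control is the maximal fluctuation $M_N=\max_{1\le k\le N}|R_k|$. Once $M_N\to 0$ almost surely is established, both remainder terms follow from crude bounds: since $\frac{1}{N}\sum_{k=1}^N\frac{k}{N}=\frac{N+1}{2N}\le 1$, we obtain $\left|\frac{2}{N}\sum_{k=1}^N\frac{k}{N}R_k\right|\le 2M_N\to 0$, and likewise $\frac{1}{N}\sum_{k=1}^N R_k^2\le M_N^2\to 0$. Thus the entire argument hinges on the behaviour of $M_N$.

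The heart of the proof, and the main obstacle, is therefore proving $M_N\to 0$ almost surely, which requires the \emph{maximal} version of Rosenthal's inequality rather than a pointwise bound. Here I would apply \eqref{ros} to the centered variables $X_j=t_j-\mathbb{E}t_j$ with $p=2+r$. The moment estimates already obtained in the proof of Proposition \ref{prop:NSD-as-limits}, namely $\mathbb{E}(X_j^2)\le \tilde{C}^{2/(2+r)}N^{-2}$ and $\mathbb{E}|X_j|^{2+r}\le C N^{-2-r}$, then give
$$
\mathbb{E}\left(M_N^{2+r}\right) \le C\left[N\cdot N^{-2-r} + \left(N\cdot N^{-2}\right)^{(2+r)/2}\right] \le C\,N^{-1-r/2}
$$
for $N$ large. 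By the Markov inequality, $\mathbb{P}(M_N>\varepsilon)\le C\varepsilon^{-(2+r)}N^{-1-r/2}$, which is summable over $N$ precisely because $r>0$. The Borel--Cantelli lemma then yields $M_N\to 0$ almost surely, in the same spirit as Lemma \ref{lma:maximum}, and substituting this into the three-term decomposition completes the proof.
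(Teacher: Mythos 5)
Your proof is correct, and although it starts from the same decomposition as the paper's own proof---writing $\tau_k=\E \tau_k+R_k$ with $\E\tau_k=k/N$ and splitting $\frac1N\sum_k\tau_k^2$ into the deterministic Riemann sum plus a cross term and a quadratic term---the mechanism you use to kill the two error terms is genuinely different. The paper reduces everything to showing $\frac1N\sum_{k=1}^N R_k^2\to0$ almost surely (the cross term is then handled by Cauchy--Schwarz), and proves this via a Markov bound on $\sum_k R_k^2$, Minkowski's integral inequality, and a separate application of Rosenthal's inequality \eqref{ros} for each fixed $k$; the maximal nature of \eqref{ros} is never actually exploited there, only the pointwise bound $\E|R_k|^{2+r}\le CN^{-1-r/2}$. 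You instead invoke \eqref{ros} exactly once, in its maximal form, to get $\E\bigl(M_N^{2+r}\bigr)\le CN^{-1-r/2}$ for $M_N=\max_{1\le k\le N}|R_k|$, and then dispose of both error terms by the crude bounds $\bigl|\frac2N\sum_k\frac kN R_k\bigr|\le 2M_N$ and $\frac1N\sum_k R_k^2\le M_N^2$. Both routes arrive at the same summable tail of order $N^{-1-r/2}$ and conclude by Borel--Cantelli, so neither is quantitatively sharper; but yours avoids Minkowski and Cauchy--Schwarz entirely and proves the strictly stronger statement that the random grid converges uniformly to the deterministic grid, $M_N\to0$ almost surely. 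That uniform control would in fact also give Lemma \ref{lma:tauN-limit} for free (since $|\tau_N-1|\le M_N$) and could be recycled in the proof of Theorem \ref{thm:N1}, whereas the paper's argument extracts only the minimal input needed for Proposition \ref{prop:D-part}, namely the average of the squared fluctuations.
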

\begin{proof}
In order to prove the claim, we first observe that it suffices to show that
\begin{equation}
\label{eq:DN-needed}
\frac{1}{N} \sum_{k=1}^N \left(\tau_{k}-\E \tau_{k}\right)^2 \to 0,
\end{equation}
almost surely.
Indeed, we may write
\begin{equation*}
\begin{split}
\frac{1}{N} \sum_{k=1}^N \tau_{k}^2 &= \frac{1}{N} \sum_{k=1}^N \left(\tau_{k}-\E \tau_{k}\right)^2 \\
&+ 2\frac{1}{N} \sum_{k=1}^N \left(\tau_k-\E \tau_{k}\right)\E \tau_{k} +\frac{1}{N} \sum_{k=1}^N \left(\E \tau_{k}\right)^2.
\end{split}
\end{equation*}
Here
\begin{equation*}
\frac{1}{N} \sum_{k=1}^N \left(\E \tau_{k}\right)^2  =\frac{1}{N} \sum_{k=1}^N \left(\frac{k}{N}\right)^2=\frac{N(N+1)(2N+1)}{6N^3} \to \frac13
\end{equation*}
as $N$ tends to infinity and, by Cauchy-Schwartz inequality,
\begin{equation*}
\begin{split}
&\left|\frac{1}{N} \sum_{k=1}^N \left(\tau_{k}-\E \tau_{k}\right)\E \tau_{k}\right| \\
&\leq \sqrt{\frac{1}{N} \sum_{k=1}^N \left(\tau_{k}-\E \tau_{k}\right)^2}\sqrt{\frac{1}{N} \sum_{k=1}^N \left(\E \tau_{k}\right)^2}
\end{split}
\end{equation*}
that converges to zero once \eqref{eq:DN-needed} is proved. Denote $\tilde{\tau}_{j} = \tau_{j} - \E \tau_{j}$ and $\tilde{t}_{j} = t_{j} - \E t_{j}$, and let $\epsilon>0$ be fixed. Markov inequality gives us
\begin{equation}
\label{eq:DN-probability}
\mathbb{P}\left(\frac{1}{N}\sum_{k=1}^N \tilde{\tau}_{k}^2 > \epsilon\right) \leq \frac{\E\left(\sum_{k=1}^N \tilde{\tau}_{k}^2\right)^{\frac{2+r}{2}}}{(\epsilon N)^{\frac{2+r}{2}}}.
\end{equation}
Furthermore, by Minkowski integral inequality we obtain
\begin{equation}
\label{eq:DN-Minkowski}
\E\left(\sum_{k=1}^N \tilde{\tau}_{k}^2\right)^{\frac{2+r}{2}}
\leq \left[\sum_{k=1}^N \left(\E |\tilde{\tau}_{k}|^{2+r}\right)^{\frac{2}{2+r}}\right]^{\frac{2+r}{2}}.
\end{equation}
By applying $(H_2)$ and the Rosenthal inequality \eqref{ros}, we can deduce
\begin{equation*}
\begin{split}
\E |\tilde{\tau}_{k}|^{2+r}
&\leq C\sum_{j=1}^N \E|\tilde{t}_{j}|^{2+r} + C\left(\sum_{j=1}^N \E \tilde{t}_{j}^2\right)^{\frac{2+r}{2}} \\
&\leq C N^{-1-r} + C N^{-\frac{2+r}{2}}\leq C N^{-1-r/2}.
\end{split}
\end{equation*}
Plugging into \eqref{eq:DN-Minkowski} leads to
{$$
\E\left(\sum_{k=1}^N \tilde{\tau}_{k}^2\right)^{\frac{2+r}{2}} \leq C N^{-1-r/2} N^{\frac{2+r}{2}}\le C.
$$}
In view of \eqref{eq:DN-probability} we get
{$$
\sum_{N=1}^\infty \mathbb{P}\left(\frac{1}{N}\sum_{k=1}^N \tilde{\tau}_{k}^2 > \epsilon\right) \le C\sum_{N=1}^\infty  (\epsilon N)^{-1-r/2}< \infty,
$$}
and hence \eqref{eq:DN-needed} follows from Borel-Cantelli lemma. This completes the proof.
\end{proof}

\subsection{Proofs of Theorem \ref{thm:N1} and Theorem \ref{thm:N}}
\label{subsec:main-proof}

\begin{proof}[Proof of Theorem \ref{thm:N1}]

First we will prove that
\begin{equation}\label{eq:denom}
\frac{1}{N(1)} \sum_{j=1}^{N(1)} \tau_{j}^2 \to \frac{1}{3}.
\end{equation}
With the convention $\sum_{j=1}^0 a_j = 0$ we can write
$$
\sum_{j=1}^{N(1)} \tau_{j}^2 = \sum_{j=1}^{N} \tau_{j}^2 + \sum_{j=N+1}^{N(1)} \tau_{j}^2
$$
for $N(1)\geq N$ and
$$
\sum_{j=1}^{N(1)} \tau_{j}^2 = \sum_{j=1}^{N} \tau_{j}^2 - \sum_{j=N(1)+1}^{N} \tau_{j}^2
$$
for $N(1)<N$. Since $\tau_{N(1)}\leq 1$, here
$$
\frac{1}{N(1)}\sum_{j=N+1}^{N(1)} \tau_{j}^2 \leq {\frac{|N-N(1)|}{N(1)}} \to 0
$$
by Lemma \ref{lma:N1-N-ratio}.
Similarly,
$$
\frac{1}{N(1)}\sum_{j=N(1)+1}^{N} \tau_{j}^2 \leq \tau^2_{N}{\frac{|N-N(1)|}{N(1)}} \to 0
$$
by Lemmas~\ref{lma:tauN-limit} and \ref{lma:N1-N-ratio}. Finally, we may apply Lemma \ref{lma:N1-N-ratio} and Proposition \ref{prop:D-part} to obtain
\begin{equation}\label{denom}
\frac{1}{N(1)} \sum_{j=1}^N \tau_{j}^2 = \frac{N}{N(1)} \frac{1}{N}\sum_{j=1}^N \tau_{j}^2 \to \frac13.
\end{equation}
Second, we will prove that
\begin{equation}\label{eq:num}
\sum_{j=1}^{N(1)} \tau_{j} \left(W_{\tau_{j}}- W_{\tau_{j-1}}\right) \to \int_0^1 (W_1 - W_t) dt.
\end{equation}
Using summation by parts we obtain
$$
\sum_{j=1}^{N(1)} \tau_{j} \left(W_{\tau_{j}}- W_{\tau_{j-1}}\right) = \tau_{N(1)} W_{\tau_{N(1)}} - \sum_{j=1}^{N(1)} W_{\tau_{j-1}}(\tau_{j}-\tau_{j-1}).
$$
Since $W$ is almost surely continuous, Lemmas~\ref{lma:tauN-limit} and \ref{lma:N1-N-ratio} implies that $\tau_{N(1)} W_{\tau_{N(1)}} \to W_1 = \int_0^1 W_1 dt$. We have
$$
\sum_{j=1}^{N(1)} W_{\tau_{j-1}}(\tau_{j}-\tau_{j-1})
= \sum_{j=1}^{N(1)+1} W_{s_{j-1}}(s_{j}-s_{j-1}) - W_{\tau_{N(1)}}(1-\tau_{N(1)}).
$$
Since $W$ is bounded almost surely as a continuous function,  $W_{\tau_{N(1)}}(1-\tau_{N(1)})$ converges almost surely to $0$.

Let us introduce the sequence $s_{j}$ by $s_{j} = \tau_{j}$ for $j=1,\ldots,N(1)$ and $s_{N(1)+1} = 1$.  This sequence satisfies
$$
\max_{1\leq j \leq N(1)+1}(s_{j}-s_{j-1}) \leq |1-\tau_{N(1)}| + \max_{1\leq j \leq N(1)}t_{j}
$$

Now, since {$\frac{N(1)}{N} \to 1$} we obtain that, in particular, for every $\omega$ there exists $N_0(\omega)$ such that for all $N\ge N_0(w)$ we have $\frac{N}{2}\leq N(1) \leq 2N$. Following the same lines as Lemma \ref{lma:maximum} we deduce that
$$
\max_{1\leq j \leq 2N}t_{j} \to 0
$$
which implies that
$$
\max_{1\leq j \leq N(1)+1}(s_{j}-s_{j-1}) \to 0.
$$
Finally, by continuity of $W$ the Riemann integral $\int_0^1 W_t dt$ exists, and we have
$$
\sum_{j=1}^{N(1)+1} W_{s_{j-1}}(s_{j}-s_{j-1}) \to \int_0^1 W_t dt
$$
almost surely.
Hence we obtain \eqref{eq:num} and the theorem is a consequence of \eqref{eq:denom} and \eqref{eq:num}.
\\
\end{proof}
\begin{proof}[Proof of Theorem~\ref{thm:N}]

By Proposition \ref{prop:D-part} it suffices to show
\begin{equation}\label{eq_num2}
\sum_{j=1}^{N} \tau_{j} \left(W_{\tau_{j}}- W_{\tau_{j-1}}\right) \to \int_0^1 (W_1 - W_t)  \,dt
\end{equation}
almost surely as $N\to \infty$.
Two cases have to be considered: $\tau_{N}<1$ and $\tau_{N}\ge 1$. When $\tau_{N}<1$, we can mimick the proof of Theorem~\ref{thm:N1} in order to derive \eqref{eq_num2}.

When $\tau_{N}\geq 1$, we write
$$
\sum_{j=1}^N W_{\tau_{j-1}}(\tau_{j}-\tau_{j-1})
= \sum_{j=1}^{N(1)} W_{\tau_{j-1}}(\tau_{j}-\tau_{j-1})  + \sum_{j=N(1)+1}^N W_{\tau_{j-1}}(\tau_{j}-\tau_{j-1}).
$$
We prove in the proof of Theorem~\ref{thm:N1} that $\sum_{j=1}^{N(1)} W_{\tau_{j-1}}(\tau_{j}-\tau_{j-1})$ converges almost surely to $\int_0^1 (W_1 - W_t) dt$. For the second sum we obtain
$$
\left|\sum_{j=N(1)+1}^N W_{\tau_{j-1}}(\tau_{j}-\tau_{j-1})\right| \leq \sup_{0\leq s \leq \tau_{N}}|W_s||\tau_{N}-\tau_{N(1)}|
$$
which converges to zero by boundedness of $W$ and Lemma \ref{lma:N1-N-ratio}. This allows us to deduce \eqref{eq_num2}.
\end{proof}

\section*{Acknowledgements}
Karine Bertin and Soledad Torres have been supported by FONDECYT grants 1171335 and 1190801 and Mathamsud 20MATH05.

\bibliographystyle{plainnat}
\bibliography{biblio}   % name your BibTeX data base

\end{document}